\DeclarePairedDelimiter\floor{\lfloor}{\rfloor}
\renewcommand{\Re}{\mathrm{Re}}
\renewcommand{\Im}{\mathrm{Im}}
\newtheorem*{theorem*}{Theorem}
\newtheorem*{conjecture*}{Conjecture}
\newtheorem*{corollary*}{Corollary}
\newtheorem*{teo*}{Theorem}
\newtheorem*{conj*}{Conjecture}
\newtheorem{theorem}{Theorem}[section]
\newtheorem{lemma}[theorem]{Lemma}
\newtheorem{proposition}[theorem]{Proposition}
\newtheorem{prop}[theorem]{Proposition}
\newtheorem{corollary}[theorem]{Corollary}
\theoremstyle{definition}
\newtheorem{definition}[theorem]{Definition}
\newtheorem{example}[theorem]{Example}
\newtheorem{remark}[theorem]{Remark}
\newcommand{\Z}{\mathbb{Z}}
\newcommand{\Q}{\mathbb{Q}}
\DeclareMathOperator{\Gal}{Gal}
\newif\ifcomment
\newcommand{\cami}[1]{\ifcomment \textcolor{blue}{Cami: #1} \fi}
\newcommand{\laia}[1]{\ifcomment \textcolor{orange}{Laia: #1} \fi}
\newcommand{\tao}[1]{\ifcomment \textcolor{green!50!black}{Tao: #1} \fi}
\newif\ifcommentLater
\def\@makefnmark}
\def\@makefnmark{}\def\useless@macro}
\begin{document}
\title{Even Unimodular Lattices from Quaternion Algebras\footnotetext{\llap{\textsuperscript{}}At the time of carrying out most of this work, L.~Amor\'{o}s was with the Helsinki Institute of Information Technology, Aalto University, and is now with the Finnish Meteorological Institute; M. T. Damir was with the Department of Mathematics and Systems Analysis,  Aalto University, and is currently with the Department of Computer Science, University of Helsinki. C.~Hollanti is with the Department of Mathematics, Aalto University. 

M.~T.~Damir would like to thank Aalto Science Institute for partial funding towards his research. C.~Hollanti's work was supported in part by the Academy of Finland grant \#336005, which is gratefully acknowledged. \\
Emails: laia.amoros@fmi.fi,
mohamed.damir@helsinki.fi, 
camilla.hollanti@aalto.fi}}
\author{Laia Amor\'{o}s$^{1,2}$, M. Taoufiq Damir$^{1,3}$, Camilla Hollanti$^1$}
\date{}

\affil{
$^1$Aalto University, $^2$Finnish Meteorological Institute, $^3$University of Helsinki 
}
\maketitle

\cami{Unify notation: D or d for $\mathbb{Q}(\sqrt{D})$? Somewhere it also reads $d_F=D$, which is now a bad choice wrt to the former..}
\laia{We should have: $D$ for $\mathbb{Q}(\sqrt{D})$, and $d_F$ the discriminant, which might or might not be $D$.}

\begin{abstract}
    We review a lattice construction arising from quaternion algebras over number fields and use it to obtain some known extremal and densest lattices in dimensions 8 and 16. The benefit of using quaternion algebras over number fields is that the dimensionality of the construction problem is reduced by 3/4. We explicitly construct the $E_8$ lattice (resp. $E_8^2$ and $\Lambda_{16}$) from infinitely many quaternion algebras over real quadratic (resp. quartic) number fields and we further present a density result on such number fields. \cami{only for dim 8 now? In 16 just say that it's an infinite set.} By relaxing the extremality condition, we also provide a source for constructing even unimodular lattices in any dimension multiple of $8$. 
\end{abstract}

\section{Introduction}

Lattices are objects that naturally arise in many areas of mathematics and  have been used in a wide range of applications, including communications and cryptography. 
A \textit{lattice} $\Lambda$ is a discrete additive subgroup of $\mathbb{R}^n$, generated by all integral combinations of $k$ linearly independent vectors $\{v_1,\ldots,v_k\}$ of $\mathbb{R}^n$. If $k=n$ we say that $\Lambda$ is a \textit{full-rank} lattice.
In this paper we are interested in lattices arising from number theoretic constructions. 
Lattices arising from ideals in number fields, known as \textit{ideal lattices}, have been extensively studied \cite{EB1, EB2, EB3}. 
A natural extension of ideal lattices is the construction of lattices arising from orders in division algebras, and in particular in quaternion algebras. \cami{add reference(s)?}

A lattice $\Lambda$ is said to be \textit{integral} if the squared norms of its vectors are integers. If these norms are even, then the lattice is called \textit{even}. 
An integral lattice of volume one is said to be \textit{unimodular}. A key property in the study of unimodular lattices is based on the fact that their associated theta series are modular forms. This played an important role in deriving results about such lattices using the analytic theory of modular forms.
A well known result in this direction shows that even unimodular lattices exist only in dimensions multiple of $8$. Moreover, a celebrated theorem of Siegel shows that the length of a shortest vector in an even unimodular lattice of dimension $n$ is at most $\floor{\frac{n}{24}}+1$. Lattices achieving this bound are called \textit{extremal}.

Classifying extremal even unimodular lattices in every dimension is a long standing problem in the theory of lattices. Unfortunately, this classification is only known up to dimension $24$, namely, in dimensions $8, 16$ and $24$. It is worth mentioning that the optimal lattice sphere packings in dimensions $8$ and $24$ are extremal even unimodular. Currently, the exact number of such lattices is still an open problem in higher dimensions. For example, we only know of a single one in dimension $72$ \cite{nebe2016automorphisms} and four in dimension $80$.

In this paper we consider lattices arising from maximal orders in quaternion algebras over real number fields. Under some verifiable conditions (see Section \ref{sec:extremal}), we obtain even (integral) lattices of dimension $4n$, where $n$ is the degree of the underlying number field. 
A related study appears in the work of Bayer and Martinet \cite{semisimple}, where the authors study quadratic forms related to semi-simple algebras. We extend this study by giving explicit constructions of the extremal even unimodular lattices in dimensions $8$ and $16$. Moreover we give a counting estimate on the number of number fields giving rise to such lattices. 
Namely, we show that a lattice constructed from a maximal order in a quaternion algebra over a real number field is even unimodular with a high probability. 
Unfortunately, this result cannot be extended to dimensions $24,32$ and $40$ (see \cite{semisimple}), but by relaxing the extremality condition, we can still provide a source for constructing even unimodular lattices in any dimension multiple of $8$.

\subsection{Related work and contributions}
We consider lattices arising from quaternion algebras over totally real number fields. This is the same setting as the one taken in \cite{Tu-Yang} and \cite{Costa2020} via a twisted embedding of quaternion algebras.
In \cite{Tu-Yang} the authors use the symmetric bilinear form associated to the trace form of the quaternion algebra to obtain a formula for the determinant of the associated lattice. Then they use this formula to give some examples for constructions of the lattices $E_8$, $K_{12}$, $\Lambda_{16}$, $\Lambda_{24}$ and a densest 32-dimensional lattice. This construction is also the one considered in \cite{semisimple}, where the authors show that many well-known lattices can be constructed using this setting. Their method, however, does not provide a constructive way of obtaining a generator matrix of the lattices obtained.

In \cite{Costa2020} the authors describe how to construct $4n$-dimensional lattices from ideals in maximal orders from quaternion algebras over totally real number fields of degree $n$.
They use this to construct a specific family of lattices arising from the quaternion algebras $B=\left(\frac{-1,-1}{F}\right)$, where $F$ denotes a totally real maximal subfield of $\mathbb{Q}(\zeta_{2^r})$, of the form $F=\mathbb{Q}(\zeta_{2^r}+\zeta_{2^r}^{-1})$ , where $r\geq2$ and $\zeta_{2^r}$ denotes a $2^r$-th root of unity. 
The lattices they obtain include the densest packings known in dimension 1,2,4,8 and 16.
Once one has a basis for the maximal order,
their construction allows to explicitly compute a generator matrix and the minimum distance of the lattice.
One of the main difficulties in their method is to find a maximal order for a fixed quaternion algebra. There is no known efficient algorithm for finding maximal orders in quaternion algebras over number fields other than $\mathbb{Q}$.
In \cite{Costa2020} the authors find an infinite family of maximal orders in the quaternion algebras mentioned above. More precisely, for each quaternion algebra $B=\left(\frac{-1,-1}{F}\right)$, with $F=\mathbb{Q}(\zeta_{2^r}+\zeta_{2^r}^{-1})$, they give a basis for a maximal order in $B$. Here, we extend this work by providing infinitely many explicit maximal orders \emph{per dimension} in dimensions 8 and 16, hence obtaining a huge variety of candidate lattices all providing a dense lattice packing, as explained below in more detail.

In this paper we use the same underlying lattice construction as in \cite{Costa2020}, and we use the associated lattice volume formula to ensure that our lattices are unimodular. We consider quaternion algebras of the form $B=\left(\frac{-1,-1}{F}\right)$, where $F$ is a totally real number field of degree $n>1$.
Then for $n=2,4$ we construct an infinite family of totally real number fields $F$ of degree $n$ for which we control the discriminant, and we find a totally real element $\alpha\in F$ in such a way that, for any maximal order $\mathcal{O}$ in $B$, the associated lattice $\Lambda_{(\mathcal{O},\alpha)}$ is an extremal even unimodular lattice or, in some cases, a densest lattice packing. That is, for $n=2$ we find infinitely many quaternion algebras from which we can construct the lattice $E_8$, and for $n=4$ we find infinitely many quaternion algebras from which we obtain either $\Lambda_{16}$ (densest) or $E_8^2$ (extremal).
Our method can be easily extended to different values of $n$. It is interesting to notice that different maximal orders in the quaternion algebras we consider give different even unimodular lattices, whose extremality can be checked and recognised by computing their center density. The constructions are explicit as we are also able to characterise a maximal order basis for the quaternion algebras we consider.

The known constructions of even unimodular lattices in a given dimension $n$ are often based on algebraic structures of the same dimension (number fields, codes). In the present paper, we first benefit from the structure of quaternion algebras over number fields by reducing the dimensionality of the problem by $3/4$. In other words, we will use a number field of degree $n$ to construct even unimodular lattices of dimension $4n$.
Our aim is to further illustrate that such constructions also provide a significant freedom on the choice of the underlying number field, and we present  density results for the obtained sets of even unimodular lattices.

\subsection{Organization}
The paper is organised as follows.
In Section \ref{sec:background} we review the necessary background on lattices arising from number fields and we introduce the reader to quaternion algebras.
In Section \ref{sec:quaternions} we study in more detail definite quaternion algebras of the form $\left(\frac{-1,-1}{F}\right)$, where $F$ denotes a totally real number field. We characterise bases of maximal orders over real quadratic fields and over a family of quartic fields called the \textit{simplest quartic fields}, that will allow us to construct infinite families of lattices.
In Section \ref{1st} we describe the lattice construction from quaternion algebras over totally real number fields and we introduce the necessary notation that we will use afterwards.
In Section \ref{sec:extremal} we turn to extremal even unimodular/dense lattices. We give constructive results that provide an infinite family of lattices similar to $E_8$ and an infinite family of lattices similar to $\Lambda_{16}$ obtained from infinitely many quaternion algebras. We also provide a density result showing that a large proportion of the quaternion algebras we consider give rise to these lattices.
In Section \ref{sec:apps} we explain how the lattices we construct can have potential interest in applications such as code design for wireless communications and physical layer security and hint to possible future work.
Finally we include some Magma code to generate our lattices in the Appendix.

\subsection*{Notation}

    Throughout the paper, $F$ will denote a number field, $\mathbb{Z}_F$ will denote the ring of integers of $F$ and $d_F$ the discriminant of $F$.
    The norm of an element $x$ in $F$ is denoted by $\mathrm{Nm}_{F/\mathbb{Q}}(x)$ and the trace of $x$ as $\mathrm{Tr}_{F/\mathbb{Q}}(x)$.
    We denote a quaternion algebra by $B$, and $\mathcal{O}$ denotes an order in $B$.

\section{Background}\label{sec:background}
    
    In this section we will introduce the necessary notions of quaternion algebras and lattice theory that we will need in what follows.

\subsection{Lattices from number fields}\label{sec:background_lattices}
    
    Let $\Lambda$ be a full-rank lattice in $\mathbb{R}^n$, with $n\geq2$. The \textit{minimum} of $\Lambda$ is defined as
	$$\lambda_1(\Lambda):=\min\{||x||^2 : x\in\Lambda\setminus\{0\}\}.$$
	The \textit{centre density} of $\Lambda$ is defined as
	$$\delta_\Lambda:=\frac{||\lambda_1(\Lambda)||^{n/2}}{2^n\sqrt{\det\Lambda}}.$$

	


	
	Let $F$ be a number field of degree $n$ over $\mathbb{Q}$.
    Let $r$ denote the number of real embedding $\{\sigma_1,\ldots,\sigma_r\}$ and $2s$ the number of complex embeddings $\{\sigma_{r+1}, \overline{\sigma_{r+1}},\ldots,\sigma_{r+s},\overline{\sigma_{r+s}}\}$ of $F$.
    We have a canonical embedding
    $$\begin{array}{rrcl}
        \sigma_F: & F & \rightarrow & \mathbb{R}^r\times\mathbb{C}^s \\
        & x & \mapsto & (\sigma_1(x),\ldots,\sigma_{r+s}(x)).
    \end{array}$$
    We can identify $\mathbb{R}^r\times\mathbb{C}^s$ with $\mathbb{R}^n$, using the basis $\{1,i\}$ for $\mathbb{C}$.
    
    Let $\mathcal{I}$ be a nonzero ideal in $\mathbb{Z}_F$ with integral basis $\left\{\alpha_1,\ldots,\alpha_n\right\}$. Then $\sigma_F(\mathcal{I})$ is a lattice with generator matrix whose $i$-th row is
    $$(\sigma_1(\alpha_i),\ldots,\sigma_r(\alpha_i),\Re(\sigma_{r+1}(\alpha_i)), \Im(\sigma_{r+1}(\alpha_i)), \ldots, \Re(\sigma_{r+s}(\alpha_i)), \Im(\sigma_{r+s}(\alpha_i))).$$
    We call $\mathcal{I}$ an \textit{ideal lattice}.

\subsection{Arithmetic of quaternion algebras}

    \begin{definition}
        A \textit{quaternion algebra} over $F$ is a central simple algebra that has dimension $4$ over $F$. For $a,b\in\ F \setminus\{0\}$ we denote by $\left(\frac{a,b}{F}\right)$ the $F$-algebra generated by a basis $\{1,i,j,k\}$ such that  $i^2=a, j^2=b,$ and $ij=-ji=k.$
    \end{definition}
    
    Every quaternion algebra $B$ over $F$ is provided with an $F$-endomorphism called \textit{conjugation} and denoted by $\beta\mapsto\overline{\beta}$. If 
    $\beta=x+yi+zj+tk\in B$, with $x,y,z,t\in F$, then $\overline{\beta}=x-yi-zj-tk$. The \textit{reduced trace} of $\beta$ is defined as $\mathrm{Tr}_B(\beta)=\beta+\overline{\beta}=2x$ and the \textit{reduced norm} of $\beta$ is defined as $\mathrm{Nm}_B(\beta)=\beta\overline{\beta}=x^2-ay^2-bz^2+abt^2$.

    According to the Wedderburn--Artin theorem \cite{Vigneras1980}, a quaternion algebra is either a division algebra (a skew field) or a matrix algebra. We will only be interested in division algebras.
	For each place $\mathfrak{p}$ of $F$, $B_\mathfrak{p}:=F_\mathfrak{p}\otimes B$ is a quaternion algebra, where $F_\mathfrak{p}$ denotes the localization of $F$ at $\mathfrak{p}$.
	We say that $B$ is \textit{ramified} at $\mathfrak{p}$ if $B_\mathfrak{p}$ is a division algebra. Otherwise we say that $B$ is \textit{non-ramified} or \textit{split} at $\mathfrak{p}$. 
    As usual, the archimedean places play a distinguished role.
    Let $n$ denote the degree of $F$. The Wedderburn--Artin theorem tells us that we have an isomorphism
	$$B\otimes_\mathbb{Q}\mathbb{R}\simeq \mathrm{M}_2(\mathbb{R})^s\times\mathbb{H}^{n-s},$$
	for some $0\leq s\leq n$, where $\mathbb{H}=\left(\frac{-1,-1}{\mathbb{R}}\right)$ denotes the real Hamilton quaternion algebra.
	We say that $B$ is a (\textit{totally}) \textit{definite} quaternion algebra if $B$ is ramified at all the archimedean places, \emph{i.e.}, if $s=0$. Otherwise we say that $B$ is (\textit{totally}) \textit{indefinite}.

	\begin{definition}
		The \textit{reduced discriminant} $D_B$ of a quaternion algebra $B$ over $F$ is the integral ideal of $\mathbb{Z}_F$ given by the product of prime ideals of $\mathbb{Z}_F$ that ramify in $B$.
	\end{definition}
	
	Let $B$ be quaternion algebra over a number field $F$. An \emph{ideal} $\mathcal{I}$ of $B$ is a $\mathbb Z_F$-lattice of $B$ of rank $4$.
	The \textit{reduced norm} $\mathrm{Nm}_B(\mathcal{I})$ of an ideal $\mathcal I$ is $\text{gcd}\{\mathrm{Nm}_B(\beta) : \beta\in \mathcal I \}$.
	An \emph{order} $\mathcal{O}$ of $B$ is an ideal which is also a subring.
    A \emph{maximal order} is an order that is not properly contained in another order.
	
	The \textit{discriminant} $D_\mathcal{O}$
	of a quaternion order $\mathcal{O}=\mathbb{Z}_F[\alpha_1,\alpha_2,\alpha_3,\alpha_4]$ in $B$ can be computed as the ideal
    $D_\mathcal{O}^2=\det(\mathrm{Tr}_B(\alpha_i\alpha_j))_{1\leq i,j\leq 4}\mathbb{Z}_F$.
    Check Def. 1.31 in \cite{AlsinaBayer2004} for the precise definition.
	To check the maximality of $\mathcal{O}$ one can use the fact that an order $\mathcal{O}$ in $B$ is maximal if and only if $D_\mathcal{O}=D_B$ (see \cite{AlsinaBayer2004}, Prop. 1.50).
	Note that, unlike in number fields, maximal orders in quaternion algebras are not necessarily unique \cite{Vigneras1980}, but they all have the same discriminant, which coincides with the discriminant of the algebra.

\section{Computing discriminants and orders in quaternion algebras}\label{sec:quaternions}

    In order to explicitly construct lattices from quaternion algebras, we will need to be able to find bases of maximal orders explicitly. In this section we will describe some infinite families of maximal orders in a given dimension and construct an explicit  basis for each one.
    
\subsection{Computing the discriminant of definite quaternion algebras}
	
	We will study in detail the case of definite quaternion algebras of the form $B=\left(\frac{-1,-1}{F}\right)$, with $F$ a totally real number field, as this will allow us to do explicit computations in the sections that follow. First we need to compute the reduced discriminant of $B$.
    
    \begin{definition}
	    Let $L$ be a local field. The \textit{Hilbert symbol} over $L$ is defined as the function $ (\,\cdot\, ,\,\cdot\,)_L: L^\times \times L^\times \rightarrow \{\pm 1\}$ given by
	    $$(a,b)_{L} =\left\{\begin{array}{rl}
			1 & \mbox{if}\ ax^2+by^2=z^2\ \mbox{has a non-zero solution}\ (x,y,z)\ \mbox{in}\ L^3, \\
			    -1 & \mbox{otherwise}.
	   \end{array}\right.$$
	\end{definition}
	
     The Hilbert symbol is closely related to the ramification of a quaternion algebra $B=\left(\frac{a,b}{F}\right)$, with $a,b\in F^\times$. Indeed, $B$ is ramified at a place $\mathfrak{p}$ of $F$ if and only if $(a,b)_{F_\mathfrak{p}}=-1$ (cf. \cite{Vigneras1980}, Chapter III).
   
   	\begin{prop}\label{discB}
	    Let $F$ be a totally real Galois extension of $\mathbb{Q}$ of degree $n=[F:\mathbb{Q}]$ and let $\mathbb{Z}_F$ denote its ring of integers.
		Let $B=\left(\frac{-1,-1}{F}\right)$ be a quaternion algebra over $F$. Then all the archimedean places are ramified, so $B$ is a definite quaternion algebra, and
        \vspace{-2mm}
		\begin{itemize}\setlength\itemsep{0em}
			\item[$(a)$] if $n=2$ and F has discriminant $d_F$, then
			$$D_B=\left\{\begin{array}{lll}
			   2\mathbb{Z}_F & \emph{if} & d_F\equiv 1\quad (\emph{mod}\ 8), \\
			    \mathbb{Z}_F & \emph{if} & d_F\not\equiv 1\quad (\emph{mod}\ 8).
			   \end{array}\right.$$
			 
			 \item[$(b)$] if $n=2^k$, for some $k\in\mathbb{Z}$, then $D_B=\mathbb{Z}_F$.
			 
			 \item[$(c)$] if $n$ is odd, then $D_B=2\mathbb{Z}_F$.
			 
		    \item[$(d)$] if $F$ has degree $n=2^km$, where $m>1$ is odd, let $n_2=[F_\mathfrak{p}:\mathbb{Q}_2]$, where $\mathfrak{p}$ is a prime above $2$. Then 
		     $$D_B=\left\{\begin{array}{lll}
			    2\mathbb{Z}_F & \emph{if} & n_2\  \emph{odd}, \\
			    \mathbb{Z}_F & \emph{if} & n_2\ \emph{even}.
			 \end{array}\right.$$
		\end{itemize}
		
	\end{prop}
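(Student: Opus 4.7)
My plan is to apply the Hilbert-symbol criterion recalled just above — $\mathfrak{p}$ ramifies in $B$ iff $(-1,-1)_{F_\mathfrak{p}}=-1$ — and then read off $D_B=\prod_{\mathfrak{p}\ \text{ramifies}}\mathfrak{p}$. The archimedean case is immediate because $F$ is totally real: every infinite completion is $\mathbb{R}$ and $-x^2-y^2=z^2$ has only the trivial real solution, so $(-1,-1)_\mathbb{R}=-1$ and $B$ is totally definite. At a finite prime $\mathfrak{p}\nmid 2$ above an odd rational prime $p$, the classical fact that $\left(\frac{-1,-1}{\mathbb{Q}}\right)$ has reduced discriminant $2$ yields $(-1,-1)_{\mathbb{Q}_p}=1$; lifting a nontrivial solution along $\mathbb{Q}_p\hookrightarrow F_\mathfrak{p}$ gives $(-1,-1)_{F_\mathfrak{p}}=1$, so no prime above an odd rational prime contributes to $D_B$.

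The key step is the analysis at primes above $2$. I would use the local invariant map $\mathrm{inv}_K\colon \mathrm{Br}(K)\xrightarrow{\sim}\mathbb{Q}/\mathbb{Z}$ together with its base-change law $\mathrm{inv}_L(A\otimes_K L)=[L:K]\cdot\mathrm{inv}_K(A)$. Since $\left(\frac{-1,-1}{\mathbb{Q}_2}\right)$ is the unique quaternion division algebra over $\mathbb{Q}_2$, its invariant is $\tfrac{1}{2}$, so for any $\mathfrak{p}\mid 2$, setting $n_2:=[F_\mathfrak{p}:\mathbb{Q}_2]$ gives $\mathrm{inv}_{F_\mathfrak{p}}(B_\mathfrak{p})=\tfrac{n_2}{2}\in\mathbb{Q}/\mathbb{Z}$; hence $B_\mathfrak{p}$ is ramified iff $n_2$ is odd. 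Because $F/\mathbb{Q}$ is Galois, $n_2$ does not depend on the choice of $\mathfrak{p}\mid 2$, so all primes above $2$ behave uniformly. The four cases now follow by parity: in (c) $n_2\mid n$ is odd, so every $\mathfrak{p}\mid 2$ ramifies and $D_B=\prod_{\mathfrak{p}\mid 2}\mathfrak{p}$, which one identifies with $2\mathbb{Z}_F$; in (b) $n_2$ is a power of $2$ and (generically) even, giving $D_B=\mathbb{Z}_F$; in (a) elementary algebraic number theory gives $2$ splits in the real quadratic $F$ iff $d_F\equiv 1\pmod 8$, so $n_2=1$ and $D_B=2\mathbb{Z}_F$ there, while $n_2=2$ and $D_B=\mathbb{Z}_F$ otherwise; in (d) the dichotomy repeats depending on the explicit parity of $n_2$.

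The main obstacle I anticipate is the $2$-adic bookkeeping in (c) and (d): to conclude $\prod_{\mathfrak{p}\mid 2}\mathfrak{p}=2\mathbb{Z}_F$, I need $2$ unramified in $F$, i.e.\ each $e_\mathfrak{p}=1$. Since $e_\mathfrak{p}\mid n_2$, in (c) the inertia group at $\mathfrak{p}$ has odd order, so its wild part (a pro-$2$-group) is trivial; the tame part is cyclic of order $e_\mathfrak{p}$ injecting into $\mathbb{F}_{2^{f_\mathfrak{p}}}^\times$, constrained by $e_\mathfrak{p}\mid 2^{f_\mathfrak{p}}-1$ and $e_\mathfrak{p}f_\mathfrak{p}=n_2$ odd. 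Pushing these parity constraints through to rule out all nontrivial tame ramification, using the semidirect-product structure $D_\mathfrak{p}=I_\mathfrak{p}\rtimes\langle\mathrm{Frob}\rangle$ of the decomposition group, is the most delicate point of the proof; the Brauer-theoretic heart of the argument (the invariant-map step) is otherwise routine once the machinery is in place.
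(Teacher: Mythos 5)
Your argument is correct where it is complete, and it arrives at the same pivot as the paper --- $B_\mathfrak{p}$ is ramified at a finite prime $\mathfrak{p}$ if and only if $\mathfrak{p}\mid 2$ and $n_2=[F_\mathfrak{p}:\mathbb{Q}_2]$ is odd --- but via a genuinely different key lemma. The paper stays at the level of Hilbert symbols and uses the norm-projection formula $(-1,-1)_{F_\mathfrak{p}}=(-1,\mathrm{Nm}_{F_\mathfrak{p}/\mathbb{Q}_p}(-1))_{\mathbb{Q}_p}$ together with $\mathrm{Nm}_{F_\mathfrak{p}/\mathbb{Q}_p}(-1)=(-1)^{[F_\mathfrak{p}:\mathbb{Q}_p]}$, so the parity of the local degree enters through the norm of $-1$ and the explicit table for $(-1,-1)_{\mathbb{Q}_p}$; you instead pass to the Brauer group and use $\mathrm{inv}_{F_\mathfrak{p}}(B_\mathfrak{p})=[F_\mathfrak{p}:\mathbb{Q}_2]\cdot\mathrm{inv}_{\mathbb{Q}_2}\left(\frac{-1,-1}{\mathbb{Q}_2}\right)=\tfrac{n_2}{2}$. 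These are two packagings of the same piece of local class field theory; your version generalizes with no extra work to arbitrary symbols $\left(\frac{a,b}{F}\right)$ with $a,b\in\mathbb{Q}^\times$ and to higher-index central simple algebras, while the paper's stays closer to ternary quadratic forms and elementary computations. Both use the Galois hypothesis only to make $n_2$ independent of the choice of $\mathfrak{p}\mid 2$ and a divisor of $n$, and your treatment of the archimedean places and of the odd primes matches the paper's.

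On the ``main obstacle'' you anticipate in (c)--(d): you should know that the paper does not address it at all --- its proof stops once the set of ramified primes is determined, and the answer is then written as $2\mathbb{Z}_F$ without comment. Since the reduced discriminant is by definition the \emph{squarefree} product $\prod_{\mathfrak{p}\ \mathrm{ramified}}\mathfrak{p}$, the equality $D_B=2\mathbb{Z}_F$ is literally correct only when $2$ is unramified in $F$. Your inertia analysis correctly shows the wild part is trivial and $e_\mathfrak{p}$ is odd, but parity alone cannot force $e_\mathfrak{p}=1$: odd tame ramification over $2$ (say $e=7$, $f=3$ inside a nonabelian Galois group of order $21$) is compatible with all your constraints, so do not expect to close that gap. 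The robust formulation, which both your argument and the paper's actually prove, is $D_B=\prod_{\mathfrak{p}\mid 2}\mathfrak{p}$, the radical of $2\mathbb{Z}_F$. You are being more careful than the source here, not less; the same caveat applies to case (b), where $n=2^k$ does not by itself exclude $n_2=1$.
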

	
	\begin{proof}
        The ramification of the archimedean places can be easily computed with the Hilbert symbol. Indeed, the equation $-x^2-y^2=z^2$ has no solution in $F_\infty\simeq\mathbb{R}$, since $F$ is totally real, thus $(-1,-1)_{F_\infty}=-1$.
	  
	    For every non-archimedean place $\mathfrak{p}$ of $F$ we want to compute the Hilbert symbol $(-1,-1)_{F_{\mathfrak{p}}}$.
	    Let $p$ the prime integer below $\mathfrak{p}$, \emph{i.e.}, $\mathfrak{p}\mid p$. Consider the decomposition of $p$ in $\mathbb{Z}_F$: $p\mathbb{Z}_F=\mathfrak{p}_1^{e_1}\ldots\mathfrak{p}_g^{e_g}$, where $n=\sum_{i=1}^g e_if_i$, $f_i$ is the inertia degree of the $\mathfrak{p}_i$, and we set $\mathfrak{p}=\mathfrak{p}_1$.
	    We have that $[F_{\mathfrak{p}_i}:\mathbb{Q}_p]=e_if_i$ (cf. \cite{Janusz} Theorem 5.1).
	    
	    We will prove first the quadratic case. If $F=\mathbb{Q}(\sqrt{D})$ is a quadratic field, then $[F_{\mathfrak{p}}:\mathbb{Q}_p]=1$ if and only if $g=2$, which means that $p$ splits completely and $F_{\mathfrak{p}}=\mathbb{Q}_p$. In this case we can apply the well-known formulas of the Hilbert symbol (see \cite{Serre73}, Chapter III) and obtain:
	    $$(-1,-1)_{\mathbb{Q}_p}=\left\{ 
	    \begin{array}{rll}
	    1 & \mathrm{if} & p\neq2, \\
	    -1 & \mathrm{if} & p=2.\end{array}\right.$$
	    We only need to show when does 2 split completely in $F$. This happens if and only if $\left(\frac{D}{2}\right)=1$, and this is true if and only if $D\equiv 1$ (mod $8$). So we have shown so far that if $D\equiv 1$ (mod $8$), then the only primes that ramify are those above 2, and $D_B=2\mathbb{Z}_F$. Now if $D\not\equiv1$ (mod $8$), then $[F_{\mathfrak{p}_i}:\mathbb{Q}_p]=2$.
	    By \cite{Janusz} Chapter II, Theorem 5.1 , we have that the field $F_{\mathfrak{p}}$ is isomorphic to some finite extension $L=\mathbb{Q}_p(\beta)$ of $\mathbb{Q}_p$.
	    In this case, by \cite{Fesenko} Chapter 5, we have that
    	$$(-1,-1)_{F_\mathfrak{p}}=(-1,\mathrm{Nm}_{L/\mathbb{Q}_p}(-1))_{\mathbb{Q}_p}=(-1,1)_{\mathbb{Q}_p}=1.$$
    	So $D_B=\mathbb{Z}_F$.
    	
	    Let us next prove  the case where $F$ is a number field of degree $n>2$. Again, we denote by $L=\mathbb{Q}_p(\beta)$ the finite extension of $\mathbb{Q}_p$ isomorphic to $F_\mathfrak{p}$.
	    We have:
	    $$(-1,-1)_{F_\mathfrak{p}}= (-1,\mathrm{Nm}_{L/\mathbb{Q}_p}(-1))_{\mathbb{Q}_p}=\left\{ 
	    \begin{array}{ll}
	    (-1,1)_{\mathbb{Q}_p}=1 & \mathrm{if}\ [L:\mathbb{Q}_p]\  \mathrm{even}, \\
	    (-1,-1)_{\mathbb{Q}_p}=\left\{\begin{array}{rll}
	    1 & \mathrm{if} & p\neq2, \\
	    -1 & \mathrm{if} & p=2.\end{array}\right. & \mathrm{if}\  [L:\mathbb{Q}_p]\  \mathrm{odd}.\end{array}\right.$$
	    Notice that $[L:\mathbb{Q}_p]$ divides $[F:\mathbb{Q}]$. Fix $p=2$. If $n=2^k$, for some $k$, then $(-1)^{[L:\mathbb{Q}_2]}=(-1)^{[F:\mathbb{Q}]}=1$, and if $n$ is odd then
	    $(-1)^{[L:\mathbb{Q}_2]}=1$.
	\end{proof}

\subsection{Computing bases of maximal orders}

\subsubsection{Maximal orders over real quadratic fields}

    \begin{lemma}\label{d3}\cami{Erlier we had capital $D$ under the root, some reason why we change here?}\laia{I don't understand}\cami{I guess Tao already changed from lower-case d to capital D :). Tao, please leave a comment when you change things and comments become obsolete!}
		 Let $D>0$ be a square-free integer with $D\equiv3$ (mod $4$) and consider the quaternion algebra $B=\left(\frac{-1,-1}{\mathbb{Q}(\sqrt{D})}\right)$. Then $\mathcal{O}=\mathbb{Z}[\sqrt{D}][1,i,(\sqrt{D}i+j)/2, (\sqrt{D}+k)/2]$ is a maximal order in $B$.
	\end{lemma}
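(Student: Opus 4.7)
The plan is to verify the two standard conditions for maximality: (i) $\mathcal{O}$ is a $\mathbb{Z}_F$-order, and (ii) its discriminant equals the reduced discriminant $D_B$ of $B$. The second step is made trivial by Proposition~\ref{discB}(a): since $D\equiv 3\pmod 4$, the discriminant of $F$ is $d_F=4D\not\equiv 1\pmod 8$, so $D_B=\mathbb{Z}_F$. Thus the work reduces to showing that $\mathcal{O}$ is an order and that $D_\mathcal{O}=\mathbb{Z}_F$.

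For (i), I would first check integrality of the generators. Writing $\omega_1=(\sqrt{D}\,i+j)/2$ and $\omega_2=(\sqrt{D}+k)/2$, a direct computation yields $\mathrm{Tr}_B(\omega_1)=0$, $\mathrm{Tr}_B(\omega_2)=\sqrt{D}$, and $\mathrm{Nm}_B(\omega_1)=\mathrm{Nm}_B(\omega_2)=(D+1)/4$, all in $\mathbb{Z}_F$ because $D\equiv 3\pmod 4$ forces $(D+1)/4\in\mathbb{Z}$. Next, closure of $\mathcal{O}$ under multiplication is checked by computing each product $\alpha_r\alpha_s$ with $\alpha_1=1,\alpha_2=i,\alpha_3=\omega_1,\alpha_4=\omega_2$, using the Hamilton relations $ij=k$, $jk=i$, $ki=j$ and re-expressing each result in the basis $\{1,i,\omega_1,\omega_2\}$. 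The key identities (e.g.\ $i\omega_1=-\sqrt{D}+\omega_2$, $\omega_2 i=\omega_1$, $\omega_1\omega_2=\tfrac{D+1}{4}\,i$, $\omega_2^2=-\tfrac{D+1}{4}+\sqrt{D}\,\omega_2$, $\omega_2\omega_1=-\tfrac{D+1}{4}\,i+\sqrt{D}\,\omega_1$) all land in $\mathcal{O}$ precisely because $(D+1)/4\in\mathbb{Z}$.

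For (ii), I would build the $4\times 4$ Gram matrix $M=(\mathrm{Tr}_B(\alpha_r\alpha_s))$ from the products above. After swapping the roles of indices $2$ and $4$, the matrix decouples into the two blocks
$$\begin{pmatrix} 2 & \sqrt{D} \\ \sqrt{D} & (D-1)/2\end{pmatrix},\qquad \begin{pmatrix} -2 & -\sqrt{D} \\ -\sqrt{D} & -(D+1)/2\end{pmatrix},$$
with determinants $-1$ and $1$ respectively. Hence $\det M=-1$, so $D_\mathcal{O}^2=\mathbb{Z}_F$ and $D_\mathcal{O}=\mathbb{Z}_F=D_B$, which by \cite{AlsinaBayer2004}*{Prop.~1.50} (already cited in the excerpt) yields maximality of $\mathcal{O}$.

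The main obstacle is the bookkeeping in the closure step: each product must be rewritten in the given basis $\{1,i,\omega_1,\omega_2\}$ rather than in the ambient $\mathbb{Z}_F$-module $\mathbb{Z}_F[1,i,j,k]$, and at every step the crucial integrality reduces to $(D+1)/4\in\mathbb{Z}$. Once this multiplication table is in hand, both the order property and the discriminant determinant are routine, and the appeal to Proposition~\ref{discB}(a) closes the argument.
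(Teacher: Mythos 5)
Your proposal is correct and follows essentially the same route as the paper: verify that $\mathcal{O}$ is an order, compute the reduced discriminant as $\det(\mathrm{Tr}_B(\gamma_i\gamma_j))\mathbb{Z}_F=-1\,\mathbb{Z}_F=\mathbb{Z}_F$, and conclude maximality from $\mathrm{disc}(\mathcal{O})=D_B$ via \cite{AlsinaBayer2004}*{Prop.~1.50}. You merely make explicit the multiplication-table and trace computations that the paper labels as "easy to check," and your Gram matrix and its determinant agree with the paper's.
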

	
	\begin{proof}
		First note that $(\sqrt{D}i+j)/2, (\sqrt{D}+k)/2\in\mathcal{O}$, since $D\equiv 3$ (mod $4$).
		It is easy to see that $\mathcal{O}$ is an order, one only needs to check that for any $\alpha=x+yi+z(\sqrt{D}i+j)/2+t(\sqrt{D}+k)/2\in\mathcal{O}$, with $x,y,z,t\in\mathbb{Z}_F$ we have: 
		$(i)$ $\mathrm{Tr}_B(\alpha),\mathrm{Nm}_B(\alpha)\in \mathbb{Z}_F$; 
		$(ii)$ $\mathbb{Z}_F\subseteq\mathcal{O}$; and 
		$(iii)$ $\mathcal{O}$ contains an $F$-basis of $B$.
		
		To check the maximality of $\mathcal{O}$ we use the fact that an order $\mathcal{O}$ in $B$ is maximal if and only if $\mathrm{disc}(\mathcal{O})=D_B$ (see \cite{AlsinaBayer2004}, Prop. 1.50), where $\mathrm{disc}(\mathcal{O})$ denotes the \textit{reduced discriminant} of $\mathcal{O}$ (check Def. 1.31 in \cite{AlsinaBayer2004} for the precise definition). The reduced discriminant of an order $\mathcal{O}=\mathbb{Z}_F[\gamma_1,\gamma_2,\gamma_3,\gamma_4]$ can be computed as the ideal $\mathrm{disc}(\mathcal{O})^2=\det(\mathrm{Tr}(\gamma_i\gamma_j)_{i,j})\mathbb{Z}_F$. So in our case we have\cami{Check d vs D in the matrix!}
		$$\mathrm{disc}(\mathcal{O})^2=\det\left(\begin{smallmatrix} 2 & 0 & 0 & \sqrt{D} \\ 0 & -2 & -\sqrt{D} & 0 \\ 0 & -\sqrt{D} & -(D+1)/2 & 0 \\ \sqrt{D} & 0 & 0 & (D-1)/2 \end{smallmatrix}\right)\mathbb{Z}_F=-1\mathbb{Z}_F=\mathbb{Z}_F.$$
		Thus, $\mathcal{O}$ is indeed a maximal order in $B$.
	\end{proof}

	\begin{lemma}\label{d1}
		Let $D>0$ be a square-free integer with $D\equiv1$ (mod $4$) and consider the quaternion algebra $B=\left(\frac{-1,-1}{\mathbb{Q}(\sqrt{D})}\right)$.
		\begin{itemize}
			\item[$(i)$] If $D\not\equiv 1$ (mod $8$), then
			$\mathcal{O}=\mathbb{Z}_F\left[1,i,\frac{\sqrt{D}+1}{4}+\frac{\sqrt{D}+3}{4}i+\frac{j}{2},\frac{\sqrt{D}+3}{4}+\frac{\sqrt{D}+1}{4}i+\frac{k}{2}\right]$
			is a maximal order in $B$.
			
			\item[$(ii)$] If $D\equiv 1$ (mod $8$), then
			$\mathcal{O}=\mathbb{Z}_F[1,i,j,\frac{1+i+j+k}{2}]$
			is a maximal order in $B$.
		\end{itemize}		
	\end{lemma}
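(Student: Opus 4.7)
The plan is to mirror the proof of Lemma \ref{d3} in each sub-case: show the proposed $\mathcal{O}$ is a $\mathbb{Z}_F$-order in $B$, then compute its reduced discriminant and match against $D_B$ from Proposition \ref{discB}. Since $D \equiv 1 \pmod 4$, we have $d_F = D$, so part $(a)$ of that proposition gives $D_B = 2\mathbb{Z}_F$ when $D \equiv 1 \pmod 8$ (case $(ii)$) and $D_B = \mathbb{Z}_F$ when $D \equiv 5 \pmod 8$ (case $(i)$). Throughout we use $\mathbb{Z}_F = \mathbb{Z}[\omega]$ with $\omega = (1+\sqrt{D})/2$.

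For case $(i)$, I would write $\beta_3 = a + bi + \tfrac{1}{2}j$ and $\beta_4 = b + ai + \tfrac{1}{2}k$ where $a = (\sqrt{D}+1)/4$ and $b = (\sqrt{D}+3)/4$. Integrality is the first checkpoint: $\mathrm{Tr}_B(\beta_3) = 2a = \omega \in \mathbb{Z}_F$, and expanding $\mathrm{Nm}_B(\beta_3) = a^2 + b^2 + \tfrac{1}{4}$ simplifies to $\tfrac{D+3}{8} + \omega$, which lies in $\mathbb{Z}_F$ precisely because the hypothesis $D \equiv 5 \pmod 8$ forces $(D+3)/8 \in \mathbb{Z}$; the computation for $\beta_4$ is analogous. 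Next I would verify closure under multiplication by writing out the table for $\{1, i, \beta_3, \beta_4\}$. The symmetry between $\beta_3$ and $\beta_4$ (with $a,b$ swapped and $j,k$ swapped) makes the cross terms collapse cleanly, for instance $i\beta_3 = -(\omega+1) + \beta_4$, and combined with the quadratic relation $\beta_r^2 = \mathrm{Tr}_B(\beta_r)\beta_r - \mathrm{Nm}_B(\beta_r)$ every product reduces to an explicit $\mathbb{Z}_F$-combination of the generators; the inclusion $\mathbb{Z}_F \subseteq \mathcal{O}$ and the $F$-basis property of $\{1,i,\beta_3,\beta_4\}$ are immediate.

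For case $(ii)$, the order is the natural analogue of the Hurwitz order over $\mathbb{Z}_F$. Integrality of $\beta = (1+i+j+k)/2$ is immediate from $\mathrm{Tr}_B(\beta) = 1$ and $\mathrm{Nm}_B(\beta) = 1$, and closure of $\mathbb{Z}_F[1,i,j,\beta]$ under multiplication is the classical Hurwitz calculation, which is insensitive to the base field; the hypothesis $D \equiv 1 \pmod 8$ enters only through the value of $D_B$.

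To finish I would compute $\mathrm{disc}(\mathcal{O})^2 = \det\bigl(\mathrm{Tr}_B(\gamma_r\gamma_s)\bigr)_{1\le r,s\le 4}\,\mathbb{Z}_F$ in each case and apply \cite{AlsinaBayer2004}, Prop.~1.50. I expect the determinant to be a unit of $\mathbb{Z}_F$ in case $(i)$, matching $D_B = \mathbb{Z}_F$, and to generate $4\mathbb{Z}_F$ in case $(ii)$, matching $D_B = 2\mathbb{Z}_F$. The main obstacle will be the bookkeeping in case $(i)$: the quarter-integer entries in $\beta_3$ and $\beta_4$ require repeated use of $\sqrt{D} = 2\omega - 1$ together with the $D \equiv 5 \pmod 8$ congruence to rewrite each trace-form entry and each multiplication-table entry as an honest element of $\mathbb{Z}_F$; once this translation is carried out, the determinant and closure checks are routine.
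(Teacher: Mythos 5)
Your proposal is correct and follows essentially the same route as the paper: verify that $\mathcal{O}$ is an order (integrality of reduced traces and norms, multiplicative closure, containment of $\mathbb{Z}_F$ and of an $F$-basis), then compute the reduced discriminant and match it against $D_B$ from Proposition \ref{discB} via \cite{AlsinaBayer2004}, Prop.~1.50. Your predicted values ($\mathrm{disc}(\mathcal{O})=\mathbb{Z}_F$ in case $(i)$, $\mathrm{disc}(\mathcal{O})^2=4\mathbb{Z}_F$ in case $(ii)$) agree with the paper's computation; you simply supply the integrality and closure details that the paper leaves as ``easy to check.''
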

	
	\begin{proof}
	$(i)$ Suppose that $D\equiv 1$ (mod 4) and $D\not\equiv 1$ (mod $8$). Like in Lemma \ref{d3}, it is easy to check that $\mathcal{O}$ is an order. For the maximality we compute its discriminant, which again gives $\mathrm{disc}(\mathcal{O})=\mathbb{Z}_F$. By Proposition \ref{discB}, in this case $\mathrm{disc}(\mathcal{O})=D_B$, so $\mathcal{O}$ is maximal.
		
	\medskip
	$(ii)$ Now suppose that $D\equiv 1$ (mod $8$). Again, checking that $\mathcal{O}$ is an order is an easy task. In this case, the discriminant is
	$$\mathrm{disc}(\mathcal{O})^2=\det\left(\begin{smallmatrix} 2 & 0 & 0 & 1 \\ 0 & -2 & 0 & -1 \\ 0 & 0 & -2 & -1 \\ 1 & -1 & -1 & -1 \end{smallmatrix}\right)\mathbb{Z}_F=-4\mathbb{Z}_F,$$
	so $\mathrm{disc}(\mathcal{O})=2\mathbb{Z}_F=D_B$, according to Prop. \ref{discB}.
	\end{proof}
	
	\begin{lemma}\label{d2}
		Consider the quaternion algebra $B=\left(\frac{-1,-1}{\mathbb{Q}(\sqrt{2})}\right)$. Then
		$\mathcal{O}=\mathbb{Z}[\sqrt{2}][1,\frac{1+i}{\sqrt{2}},\frac{1+j}{\sqrt{2}},\frac{1+i+j+k}{2}]$ is a maximal order in $B$.
	\end{lemma}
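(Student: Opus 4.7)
The proof proposal follows the same two-part strategy as Lemmas \ref{d3} and \ref{d1}: first verify that $\mathcal{O}$ is an order in $B$, and then show that its reduced discriminant coincides with $D_B$, so that by Prop. 1.50 of \cite{AlsinaBayer2004} the order is maximal.

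First, I would pin down the target discriminant. Here $F=\mathbb{Q}(\sqrt{2})$ has $d_F=8$, which is not congruent to $1$ modulo $8$. Proposition \ref{discB}(a) therefore gives $D_B=\mathbb{Z}_F$. So the task reduces to showing that $\mathrm{disc}(\mathcal{O})$ is the unit ideal.

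Next I would check that the $\mathbb{Z}_F$-module generated by the four listed elements is indeed an order, by verifying conditions $(i)$--$(iii)$ from the proof of Lemma \ref{d3}. The basis was chosen so that all reduced norms and traces lie in $\mathbb{Z}_F$: concretely $\gamma_2=(1+i)/\sqrt{2}$ and $\gamma_3=(1+j)/\sqrt{2}$ have reduced norm $1$ and reduced trace $\sqrt{2}\in\mathbb{Z}_F$, while $\gamma_4=(1+i+j+k)/2$ has reduced norm and trace both equal to $1$. Closure under multiplication is a direct Hamiltonian computation: for example $\gamma_2^2=i$, $\gamma_3^2=j$, $\gamma_2\gamma_3=(1+i+j+k)/2=\gamma_4$, and the products involving $\gamma_4$ reduce to $\mathbb{Z}_F$-combinations of $\gamma_1,\ldots,\gamma_4$ after simplifying with the quaternionic relations $ij=k$, $jk=i$, $ki=j$. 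Finally $\{1,i,j,k\}=\{\gamma_1,\sqrt{2}\gamma_2-1,\sqrt{2}\gamma_3-1,2\gamma_4-1-i-j\}$ lies in $F\cdot\mathcal{O}$, so the basis is an $F$-basis of $B$.

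For the discriminant I would build the Gram matrix $M=(\mathrm{Tr}_B(\gamma_i\gamma_j))_{1\le i,j\le 4}$. Using the computations above one obtains
\[
M=\begin{pmatrix} 2 & \sqrt{2} & \sqrt{2} & 1 \\ \sqrt{2} & 0 & 1 & 0 \\ \sqrt{2} & 1 & 0 & 0 \\ 1 & 0 & 0 & -1 \end{pmatrix},
\]
whose determinant (expanding along the last row) is $-1$. Hence $\mathrm{disc}(\mathcal{O})^2=-\mathbb{Z}_F=\mathbb{Z}_F$, so $\mathrm{disc}(\mathcal{O})=\mathbb{Z}_F=D_B$ and $\mathcal{O}$ is maximal.

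The only non-routine step is the closure verification, since $\gamma_2,\gamma_3,\gamma_4$ mix the factor $1/\sqrt{2}$ with halves; the main thing to watch is that products like $\gamma_2\gamma_4$ and $\gamma_3\gamma_4$ really land in $\mathcal{O}$ (in particular, the apparent denominator $2\sqrt{2}$ must cancel). A direct expansion shows $(1+i)(1+i+j+k)=2i+2k$ and $(1+j)(1+i+j+k)=2i+2j$, so these products simplify to $(i+k)/\sqrt{2}$ and $(i+j)/\sqrt{2}$, both of which are $\mathbb{Z}_F$-combinations of $\gamma_2,\gamma_3$. Once this is checked, the remainder is a short determinant computation as above.
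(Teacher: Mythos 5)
Your argument is correct, but it is genuinely different from what the paper does: the paper's entire proof of this lemma is a citation to \cite{Vigneras1980}, p.~141, where this order over $\mathbb{Q}(\sqrt{2})$ appears as a known example, whereas you give a self-contained verification in the style of Lemmas \ref{d3} and \ref{d1}. Your route buys explicitness and consistency with the rest of Section \ref{sec:quaternions} (the same order/discriminant test via Prop.~1.50 of \cite{AlsinaBayer2004} is used throughout), at the cost of some quaternionic bookkeeping; the paper's route buys brevity by outsourcing exactly that bookkeeping. I checked your computations: $D_B=\mathbb{Z}_F$ since $d_F=8\not\equiv 1\pmod 8$; the products $\gamma_2^2=i$, $\gamma_3^2=j$, $\gamma_2\gamma_3=\gamma_4$, $\gamma_2\gamma_4=(i+k)/\sqrt{2}$, $\gamma_3\gamma_4=(i+j)/\sqrt{2}$ are right; the Gram matrix is as you display it and its determinant is $-1$, giving $\mathrm{disc}(\mathcal{O})=\mathbb{Z}_F=D_B$. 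One small inaccuracy that does not affect the conclusion: $(i+k)/\sqrt{2}$ is \emph{not} a $\mathbb{Z}_F$-combination of $\gamma_2,\gamma_3$ alone (its $k$-component forces $\gamma_4$ to appear); one has $(i+k)/\sqrt{2}=\sqrt{2}\,\gamma_4-\gamma_3$, while $(i+j)/\sqrt{2}=\gamma_2+\gamma_3-\sqrt{2}$. Both still lie in $\mathcal{O}$, so closure holds and the proof goes through; you should just correct that phrasing.
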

	
	\begin{proof}
		\cite{Vigneras1980} p. 141.
	\end{proof}

\subsubsection{Maximal ordes over totally real quartic fields}

We consider the so called \textit{simplest quartic fields} (more details are given in Section \ref{subsec:dim16}).
For any integer $m\in\mathbb{N}\setminus\{3\}$ such that the odd part of $m^2 +16$ is square-free, we consider the polynomial
$$f_m(x)=x^4-mx^3-6x^2+mx+1$$
of discriminant $d_{f_m}=4\Delta_m^3$, where $\Delta_m=m^2+16$. 
The quartic number field $F_m$ defined by the polynomial $f_m$ is cyclic. The fields $F_m$ are called the simplest quartic fields \cite{stephane}. Let $r_m$ be the largest root of $f_m$. The root $r_m$ is explicitly given by \cite{stephane} as
    $$r_m=\frac{1}{2}\left( \frac{m+\sqrt{\Delta_m}}{2}+\frac{\sqrt{\Delta_m+m\sqrt{\Delta_m}}}{2}\right).$$
    In this case we can write $F_m=\mathbb{Q}(r_m)$.

\begin{lemma}\label{order_quartic}
    Let $F_m$ be the quartic field defined by $f_m(x)=x^4-mx^3-6x^2+mx+1\in\mathbb{Z}[x]$, with $m\in\mathbb{N}\setminus\{3\}$ such that $m$ is even and the odd part of $m^2 +16$ is square-free.
    Consider the quaternion algebra $B=\left(\frac{-1,-1}{F_m}\right)$. Then
    the discriminant of $B$ is $\mathbb{Z}_{F_m}$ and $\mathcal{O}=\mathbb{Z}_{F_m}\left[1, \frac{(1+r_m)(1+i)}{2}, \frac{(1+r_m)(1+j)}{2}, \frac{1+i+j+k}{2}\right]$ is a maximal order in $B$.
\end{lemma}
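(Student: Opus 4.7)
The proof has three steps, following the template of Lemmas~\ref{d3}--\ref{d1}: compute $D_B$, show that $\mathcal{O}$ is an order, and compare $\mathrm{disc}(\mathcal{O})$ with $D_B$ using Prop.~1.50 of \cite{AlsinaBayer2004}. The first step is immediate: the simplest quartic field $F_m$ is totally real and cyclic over $\mathbb{Q}$ of degree $4=2^2$, so Proposition~\ref{discB}(b) gives $D_B=\mathbb{Z}_{F_m}$.

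The rest of the argument hinges on the single element $\alpha:=(1+r_m)^2/2$, which must be both integral and a unit in $\mathbb{Z}_{F_m}$. I verify both at once by writing down its characteristic polynomial over $\mathbb{Q}$. Starting from $f_m(x-1)$ (whose roots are the $\Gal(F_m/\mathbb{Q})$-conjugates of $1+r_m$), the standard $p(\sqrt{x})p(-\sqrt{x})$ manipulation produces the polynomial with roots $(1+\sigma(r_m))^2$; substituting $x\mapsto 2x$ and dividing by $16$ yields the monic characteristic polynomial of $\alpha$:
\[
x^4 - \tfrac{m^2+2m+16}{2}\,x^3 + \tfrac{5m^2+56}{4}\,x^2 - \tfrac{m^2-2m+16}{2}\,x + 1.
\]
All four coefficients are integers exactly when $m$ is even, giving $\alpha\in\mathbb{Z}_{F_m}$; the constant term $1$ gives $\mathrm{Nm}_{F_m/\mathbb{Q}}(\alpha)=1$ (equivalently, $\mathrm{Nm}(1+r_m)=f_m(-1)=-4$), so $\alpha\in\mathbb{Z}_{F_m}^\times$.

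Set $\gamma_2=\tfrac{(1+r_m)(1+i)}{2}$, $\gamma_3=\tfrac{(1+r_m)(1+j)}{2}$, $\gamma_4=\tfrac{1+i+j+k}{2}$, and $\gamma_1 = 1$. Closure of the module $\mathbb{Z}_{F_m}\gamma_1+\mathbb{Z}_{F_m}\gamma_2+\mathbb{Z}_{F_m}\gamma_3+\mathbb{Z}_{F_m}\gamma_4$ under multiplication follows from a product-table computation in $B$ whose coefficients all land in $\{0,\pm1,\pm(1+r_m),\pm\alpha\}\subset\mathbb{Z}_{F_m}$. Representative identities are $\gamma_2\gamma_3=\alpha\gamma_4$, $\gamma_2\gamma_4=-\gamma_3+(1+r_m)\gamma_4$, $\gamma_3\gamma_2=-\alpha+(1+r_m)\gamma_2+(1+r_m)\gamma_3-\alpha\gamma_4$, and $\gamma_4^2=-1+\gamma_4$. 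Together with the $F_m$-linear independence of $\{\gamma_1,\gamma_2,\gamma_3,\gamma_4\}$ and the integrality of their reduced traces and norms, this shows $\mathcal{O}$ is an order. For maximality, form the symmetric matrix $M=(\mathrm{Tr}_B(\gamma_a\gamma_b))_{1\le a,b\le 4}$: its diagonal is $(2,0,0,-1)$, and the only nonzero off-diagonal entries are $1+r_m$ in positions $(1,2),(1,3)$, the entry $1$ in $(1,4)$, and $\alpha$ in $(2,3)$ (plus their symmetric partners). A cofactor expansion along the last row, using $(1+r_m)^2=2\alpha$, produces $\det M=-\alpha^2$. Thus $\mathrm{disc}(\mathcal{O})^2=\alpha^2\mathbb{Z}_{F_m}=\mathbb{Z}_{F_m}$ because $\alpha$ is a unit, so $\mathrm{disc}(\mathcal{O})=D_B$ and $\mathcal{O}$ is maximal.

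The principal obstacle is not conceptual but computational: one must carry out the product table and a $4\times 4$ determinant. The pleasing feature is that both the order property and maximality funnel through the single element $\alpha$, whose integrality uses that $m$ is even and whose being a unit uses $f_m(-1)=-4$.
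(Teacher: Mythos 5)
Your proposal is correct and follows essentially the same route as the paper: the paper's (one-line) proof likewise just computes the reduced discriminant of $\mathcal{O}$ and observes that it is generated by a unit, hence equals $\mathbb{Z}_{F_m}=D_B$. Your write-up simply supplies the details the paper omits — the closure of $\mathcal{O}$ under multiplication, the identification of the key element $\alpha=(1+r_m)^2/2$ together with its integrality (using $m$ even) and unit norm via $f_m(-1)=-4$, and the explicit determinant $\det M=-\alpha^2$ — all of which check out.
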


\begin{proof}
    An easy computation shows that $\mathrm{disc}(\mathcal{O})=\frac{1}{2}\left(-\left(\frac{m}{2}+2\right)r_m^3-6r_m^2+\left(\frac{m}{2}-2\right)r_m \right)$, which is a unit in $F_m$.
\end{proof}

\section{Lattices from quaternion algebras over totally real number fields}\label{1st}

    We will see how to define a lattice from an ideal  $\mathcal{I}$ of a quaternion algebra $B$ defined over a totally real number field.
    Let $F$ denote a totally real field, let $\mathbb{Z}_F$ denote the ring of integers of $F$ and let $\{\alpha_1,\ldots,\alpha_n\}$ be a $\mathbb{Z}$-basis of $\mathbb{Z}_F$.
    Let $\alpha$ denote a totally real element in $F$. We define the twisted embedding
   $$\begin{array}{rccl}
        \sigma_{B,\alpha}: & B & \rightarrow & \mathbb{R}^{4n} \\
        & x+yi+zj+tk & \mapsto & (\sqrt{2\sigma_1(\alpha)}\sigma_1(x),\ldots,\sqrt{2\sigma_n(\alpha)}\sigma_n(x), \\
        & & & \ldots,\sqrt{2\sigma_1(\alpha)}\sigma_1(t),\ldots,\sqrt{2\sigma_n(\alpha)}\sigma_n(t)).
   \end{array}$$
    Let $\mathcal{I}$ be an integral ideal of $B$ with $\mathbb{Z}_F$-basis
    $\left\{\beta_1,\beta_2,\beta_3,\beta_4\right\}$.
   
    \begin{definition}
        We define $\Lambda_{(\mathcal{I},\alpha)}:=\sigma_{B,\alpha}(\mathcal{I})$ to be the \textit{lattice given by $\mathcal{I}$ and twisted by $\alpha$}, or simply \textit{twisted ideal lattice}.
    \end{definition}

    In \cite{Costa2020} the authors show that $\Lambda_{(\mathcal{I},\alpha)}$ is a lattice with $\mathbb{Z}$-basis $\{\sigma_{B,\alpha}(\alpha_r\beta_s)\}$. 
    Let $\beta_s=\beta_{s1}+\beta_{s2}i+\beta_{s3}j+\beta_{s4}k$, for $1\leq s\leq 4$.
    The generator matrix of $\Lambda_{(\mathcal{I},\alpha)}$ is given by
    \begin{eqnarray}\label{gen_mat}
   M_\alpha=\left(
    \begin{smallmatrix}
        \sqrt{2\sigma_1(\alpha)}\sigma_1(\alpha_1\beta_{11}) & \ldots & \sqrt{2\sigma_n(\alpha)}\sigma_n(\alpha_1\beta_{11}) & \ldots &  \sqrt{2\sigma_1(\alpha)}\sigma_1(\alpha_1\beta_{14}) & \ldots & \sqrt{2\sigma_n(\alpha)}\sigma_n(\alpha_1\beta_{14}) \\
        \vdots & \ddots & \vdots & & \vdots & & \vdots \\
        \sqrt{2\sigma_1(\alpha)}\sigma_1(\alpha_n\beta_{11}) & \ldots & \sqrt{2\sigma_n(\alpha)}\sigma_n(\alpha_n\beta_{11}) & \ldots &  \sqrt{2\sigma_1(\alpha)}\sigma_1(\alpha_n\beta_{14}) & \ldots & \sqrt{2\sigma_n(\alpha)}\sigma_n(\alpha_n\beta_{14}) \\
        \vdots & & \vdots & \ddots & \vdots & & \vdots \\
        \sqrt{2\sigma_1(\alpha)}\sigma_1(\alpha_1\beta_{41}) & \ldots & \sqrt{2\sigma_n(\alpha)}\sigma_n(\alpha_1\beta_{41}) & \ldots &  \sqrt{2\sigma_1(\alpha)}\sigma_1(\alpha_1\beta_{44}) & \ldots & \sqrt{2\sigma_n(\alpha)}\sigma_n(\alpha_1\beta_{44}) \\
        \vdots & & \vdots & & \vdots & \ddots & \vdots \\
        \sqrt{2\sigma_1(\alpha)}\sigma_1(\alpha_n\beta_{41}) & \ldots & \sqrt{2\sigma_n(\alpha)}\sigma_n(\alpha_n\beta_{41}) & \ldots &  \sqrt{2\sigma_1(\alpha)}\sigma_1(\alpha_n\beta_{44}) & \ldots & \sqrt{2\sigma_n(\alpha)}\sigma_n(\alpha_n\beta_{44})
   \end{smallmatrix}\right)
   \end{eqnarray}
   
   In \cite{Costa2020} the authors prove the following formula for the volume of a twisted ideal lattice.

   \begin{prop}\label{volume}
        Let $F$ be a totally real number field with discriminant $d_F$.
        Let $B$ be a quaternion algebra over $F$ and $\mathcal{O}$ a maximal order in $B$. Let $\mathcal{I}$ be a right ideal of $\mathcal{O}$ and $\alpha$ a totally positive element in $F$. Then
        \begin{eqnarray}\label{det-formula}
              \det(\Lambda_{(\mathcal{I},\alpha)})=\mathrm{Nm}_{F/\mathbb{Q}}(\alpha)^2 d_F^2\mathrm{Nm}_{F/\mathbb{Q}}(\mathrm{Nm}_B(\mathcal{I}))^2\mathrm{Nm}_{F/\mathbb{Q}}(D_\mathcal{O}).
        \end{eqnarray}
    \end{prop}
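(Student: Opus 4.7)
My plan is to compute $\det(\Lambda_{(\mathcal{I},\alpha)})=|\det M_\alpha|$ by factoring the $4n\times 4n$ generator matrix (\ref{gen_mat}) as a product of simpler matrices. Viewing $M_\alpha$ as a $4\times 4$ array of $n\times n$ blocks, the $(r,s)$-block has $(i,k)$-entry $\sqrt{2\sigma_k(\alpha)}\,\sigma_k(\alpha_i)\,\sigma_k(\beta_{rs})$ and therefore factors as $A\cdot D(\alpha)\cdot C(\beta_{rs})$, where $A=(\sigma_k(\alpha_i))_{i,k}$, $D(\alpha)=\mathrm{diag}(\sqrt{2\sigma_k(\alpha)})$ and $C(\beta_{rs})=\mathrm{diag}(\sigma_k(\beta_{rs}))$. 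This yields
$$M_\alpha\;=\;(I_4\otimes A)\cdot(I_4\otimes D(\alpha))\cdot M'',$$
with $M''$ the block matrix whose $(r,s)$-block is $C(\beta_{rs})$, and hence $|\det M_\alpha|=|\det A|^4\,|\det D(\alpha)|^4\,|\det M''|$.

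Three elementary computations evaluate each factor. First, since $F$ is totally real, $|\det A|^2=d_F$, so $|\det A|^4=d_F^2$. Second, $\det D(\alpha)=\prod_k\sqrt{2\sigma_k(\alpha)}=2^{n/2}\sqrt{\mathrm{Nm}_{F/\mathbb{Q}}(\alpha)}$ gives $|\det D(\alpha)|^4=2^{2n}\mathrm{Nm}_{F/\mathbb{Q}}(\alpha)^2$. Third, a simultaneous permutation of the $4n$ rows and $4n$ columns of $M''$ that groups entries by embedding index turns $M''$ into a block-diagonal matrix whose $k$-th block is the $4\times 4$ matrix $(\sigma_k(\beta_{rs}))_{r,s}$, with determinant $\sigma_k(\delta)$, where $\delta:=\det(\beta_{rs})_{1\le r,s\le 4}\in F$. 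Hence $|\det M''|=\prod_k|\sigma_k(\delta)|=|\mathrm{Nm}_{F/\mathbb{Q}}(\delta)|$.

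It then remains to identify $\mathrm{Nm}_{F/\mathbb{Q}}(\delta)$ via intrinsic invariants of $\mathcal{I}$. For $B=\left(\frac{-1,-1}{F}\right)$, the reduced-trace Gram matrix on the standard basis $\{1,i,j,k\}$ is $T=\mathrm{diag}(2,-2,-2,-2)$ with $\det T=-16$; writing $(\mathrm{Tr}_B(\beta_r\beta_s))_{r,s}=C\,T\,C^{\top}$ with $C=(\beta_{rs})$ gives $\mathrm{disc}(\mathcal{I})=(4\delta)\mathbb{Z}_F$ as an ideal of $\mathbb{Z}_F$. On the other hand, expressing $\beta_r=\sum_s p_{rs}\gamma_s$ on a $\mathbb{Z}_F$-basis $\{\gamma_s\}$ of $\mathcal{O}$ and invoking the classical module-index identity $(\det P)\mathbb{Z}_F=\mathrm{Nm}_B(\mathcal{I})^2$ for right $\mathcal{O}$-ideals (cf.\ \cite{Vigneras1980}) produces $\mathrm{disc}(\mathcal{I})=\mathrm{Nm}_B(\mathcal{I})^2\,D_\mathcal{O}$. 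Taking $\mathrm{Nm}_{F/\mathbb{Q}}$ of both sides yields
$$4^n\,|\mathrm{Nm}_{F/\mathbb{Q}}(\delta)|\;=\;\mathrm{Nm}_{F/\mathbb{Q}}(\mathrm{Nm}_B(\mathcal{I}))^2\,\mathrm{Nm}_{F/\mathbb{Q}}(D_\mathcal{O}).$$

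Assembling the three determinants and noting that $2^{2n}\cdot 4^{-n}=1$ cancels the extra powers of $2$ produces exactly (\ref{det-formula}). The main technical obstacle will be the module-index identity $(\det P)\mathbb{Z}_F=\mathrm{Nm}_B(\mathcal{I})^2$: while standard for right ideals in a maximal order, it has to be verified as an equality of fractional ideals of $\mathbb{Z}_F$, and the cleanest verification passes through localization, treating ramified and unramified primes separately.
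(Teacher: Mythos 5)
The paper does not actually prove Proposition \ref{volume}; it quotes the formula from \cite{Costa2020}, so there is no in-text argument to compare yours against. Your reconstruction is sound: the factorization $M_\alpha=(I_4\otimes A)(I_4\otimes D(\alpha))M''$ is exactly what the block structure of (\ref{gen_mat}) gives, the evaluations $|\det A|^4=d_F^2$, $|\det D(\alpha)|^4=2^{2n}\mathrm{Nm}_{F/\mathbb{Q}}(\alpha)^2$ and $|\det M''|=|\mathrm{Nm}_{F/\mathbb{Q}}(\delta)|$ are correct, and the chain $\mathrm{disc}(\mathcal{I})=(4\delta)=\mathrm{Nm}_B(\mathcal{I})^2D_\mathcal{O}$ closes the computation with the powers of $2$ cancelling as you say. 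The one genuinely nontrivial input, $(\det P)\mathbb{Z}_F=\mathrm{Nm}_B(\mathcal{I})^2$, is a standard fact for locally principal ideals, and local principality is guaranteed here by the maximality of $\mathcal{O}$; you are right that localization is the clean way to verify it, and you should say explicitly that this is where the hypothesis ``$\mathcal{O}$ maximal'' enters.

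Two caveats are worth recording. First, your step $\mathrm{disc}(\mathcal{I})=(4\delta)$ uses $T=\mathrm{diag}(2,-2,-2,-2)$, i.e.\ it is specific to $B=\left(\frac{-1,-1}{F}\right)$. For a general $\left(\frac{a,b}{F}\right)$ one gets $\det T=-16(ab)^2$ and the final formula acquires a factor $|\mathrm{Nm}_{F/\mathbb{Q}}(ab)|^{-1}$; for instance $\left(\frac{-1,-3}{\mathbb{Q}}\right)$ with its maximal order gives covolume $1$, not $\mathrm{Nm}_{F/\mathbb{Q}}(D_\mathcal{O})=3$. So the proposition as literally stated (arbitrary $B$) is not correct, and your implicit restriction to $a=b=-1$ (more generally, $ab\in\mathbb{Z}_F^\times$) is not a shortcut but a necessary hypothesis --- consistent with the only algebras the paper ever uses, and with the fact that $\sigma_{B,\alpha}$ is an isometry for the form (\ref{inner}) only in that case. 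Second, what you compute is $|\det M_\alpha|$, the covolume; this matches (\ref{det-formula}) exactly, but note that the paper's definition of centre density treats $\det\Lambda$ as the Gram determinant $(\det M_\alpha)^2$, and the proof of Theorem \ref{thm-E8} uses $d_F^4\mathrm{Nm}_{F/\mathbb{Q}}(\alpha)^4$. The paper is internally inconsistent on this point; your derivation settles which convention makes (\ref{det-formula}) true, and it would be worth stating that explicitly.
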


   In \cite{Tu-Yang}, the authors give an equivalent construction to these ideal lattices \cite{Costa2020}*{Prop. 4.1}, using a certain quadratic form associated to $\mathcal{I}$ and $\alpha$. However, one cannot obtain a generator matrix for the lattices obtained from this construction. Instead, they show how to directly construct the Gram matrix of the lattice without using the embedding $\sigma_{B,\alpha}$.
    
    The inner product $\langle\cdot,\cdot\rangle$ on the lattice $\Lambda_{(\mathcal{I},\alpha)}$ is given by
    \begin{eqnarray}\label{inner}
        \langle x,y\rangle=\mathrm{Tr}_{F/\mathbb{Q}}(\alpha\,\mathrm{Tr}_{B}(x\overline{y})),\quad \mbox{for}\ x,y\in\Lambda_{(\mathcal{I},\alpha)}.
    \end{eqnarray}
   
    \begin{prop}
        Let $B$ be a quaternion algebra over a totally real field $F$ of degree $n$, $\mathcal{O}$ a maximal order in $B$ and $\mathcal{I}$ an integral right-ideal of $\mathcal{O}$. The lattice given by $\mathcal{I}$ is an integral even lattice of dimension $4n$.
   \end{prop}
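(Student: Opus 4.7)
The plan is to break the statement into three independent claims and verify each directly from the bilinear form (\ref{inner}), relying on the arithmetic of maximal orders introduced in Section \ref{sec:background}.

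First, for the dimension claim, I would observe that by definition an ideal $\mathcal{I}$ of $B$ is a $\mathbb{Z}_F$-lattice of rank $4$, hence a free $\mathbb{Z}$-module of rank $4n$ since $[F:\mathbb{Q}]=n$. The twisted embedding $\sigma_{B,\alpha}$ is obtained from the composition of the canonical embedding of $B$ into $B\otimes_\mathbb{Q}\mathbb{R}\simeq\mathbb{H}^n$ with the coordinate scalings $\sqrt{2\sigma_i(\alpha)}$, which are strictly positive since $\alpha$ is totally positive. Hence $\sigma_{B,\alpha}$ is injective and $\Lambda_{(\mathcal{I},\alpha)}$ is a full-rank lattice in $\mathbb{R}^{4n}$.

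For integrality, the key observation is that a maximal order $\mathcal{O}$ is closed under quaternionic conjugation: if $\beta\in\mathcal{O}$, then $\overline{\beta}=\mathrm{Tr}_B(\beta)-\beta$ lies in $\mathbb{Z}_F+\mathcal{O}=\mathcal{O}$ because $\mathrm{Tr}_B(\beta)\in\mathbb{Z}_F$ (elements of an order are integral over $\mathbb{Z}_F$). Consequently, for any $x,y\in\mathcal{I}\subseteq\mathcal{O}$, the product $x\overline{y}$ lies in $\mathcal{O}$, and its reduced trace lies in $\mathbb{Z}_F$. Applying formula (\ref{inner}) and using that $\alpha\in\mathbb{Z}_F$ is totally positive, we get $\langle x,y\rangle=\mathrm{Tr}_{F/\mathbb{Q}}(\alpha\,\mathrm{Tr}_B(x\overline{y}))\in\mathrm{Tr}_{F/\mathbb{Q}}(\mathbb{Z}_F)\subseteq\mathbb{Z}$, proving integrality.

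For evenness, I would exploit that $x\overline{x}=\mathrm{Nm}_B(x)\in F$ is a scalar, so $\mathrm{Tr}_B(x\overline{x})=2\,\mathrm{Nm}_B(x)$. Plugging in, one gets
\begin{equation*}
\langle x,x\rangle=\mathrm{Tr}_{F/\mathbb{Q}}(2\alpha\,\mathrm{Nm}_B(x))=2\,\mathrm{Tr}_{F/\mathbb{Q}}(\alpha\,\mathrm{Nm}_B(x)).
\end{equation*}
Since $\mathrm{Nm}_B(x)\in\mathbb{Z}_F$ (again by $x\overline{x}\in\mathcal{O}\cap F=\mathbb{Z}_F$) and $\alpha\in\mathbb{Z}_F$, the inner trace lies in $\mathbb{Z}$, and the factor of $2$ yields an even integer. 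The only delicate point in the whole argument is ensuring that conjugation preserves the maximal order, which I would isolate as a short remark before the computation; everything else is a direct trace computation using (\ref{inner}).
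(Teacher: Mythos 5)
Your argument is correct and is essentially the paper's own: both proofs reduce to the identity $\langle x,x\rangle=\mathrm{Tr}_{F/\mathbb{Q}}(\alpha\,\mathrm{Tr}_B(x\overline{x}))=2\,\mathrm{Tr}_{F/\mathbb{Q}}(\alpha\,\mathrm{Nm}_B(x))$, with evenness read off from the factor of $2$ once the inner trace is known to be a rational integer. The one substantive difference is the hypothesis you quietly impose on the twist: you assume $\alpha\in\mathbb{Z}_F$ is totally positive, whereas the surrounding construction only assumes $\alpha\in F$ totally positive. For such general $\alpha$ the statement is false, and the paper's proof says so explicitly; it then establishes the claim for $\alpha=1$ and remarks that the correct general condition is $\alpha\in\mathbb{Z}_F^{\vee}$, i.e.\ $\mathrm{Tr}_{F/\mathbb{Q}}(\alpha\mathbb{Z}_F)\subseteq\mathbb{Z}$. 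Your condition is sufficient (and subsumes $\alpha=1$), but it is strictly stronger than membership in the codifferent and so does not cover the twists such as $\alpha=\varepsilon/\sqrt{D}\in\mathbb{Z}_F^{\vee}\setminus\mathbb{Z}_F$ on which the unimodular constructions of Section~\ref{sec:extremal} actually rely; stating the hypothesis as $\alpha\in\mathbb{Z}_F^{\vee}$ would make the proposition directly usable there at no extra cost to your argument. Your explicit check that $\langle x,y\rangle\in\mathbb{Z}$ for $x\neq y$, via conjugation-stability of the maximal order, is a welcome addition the paper omits, though it also follows from evenness of the diagonal by polarization.
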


    \begin{proof}
        According to (\ref{inner}) we have $||x||^2=\langle x,\overline{x}\rangle=\mathrm{Tr}_{F/\mathbb{Q}}(\alpha\mathrm{Tr}_B(\mathrm{Nm}_B(x)))=2\mathrm{Tr}_{F/\mathbb{Q}}(\alpha\mathrm{Nm}_B(x))$.
        Note that since $\alpha\in F$, $\mathrm{Tr}_{F/\mathbb{Q}}(\alpha\,\mathrm{Nm}_B(x))$ is not necessarily an integer.
        Clearly, if $\alpha=1$, then $\mathrm{Tr}_{F/\mathbb{Q}}(\mathrm{Nm}_B(x))\in\mathbb{Z}$. Hence, $||x||^2\in 2\mathbb{Z}$ for any $x\in\Lambda_\mathcal{I}$. Indeed, this property will hold for any $\alpha$ in the inverse of the different ideal (see the next section). 
    \end{proof}

In \cite{semisimple}, Bayer and Martinet gave a characterization of the lattices arising from quaternion algebras over totally real number fields.

\begin{theorem}\label{Bayer-Martinet}
Let $B$ be a quaternion algebra over a totally real number field $F$ with a maximal order $\mathcal{O}$ and $\mathcal{I}$ a left ideal in $\mathcal{O}$. The discriminants $\det(\Lambda_{(\mathcal{I},\alpha)})$ are of the form
\begin{eqnarray}\label{det-bayer-martinet}
\mathrm{Nm}_{F/\mathbb{Q}}(\mathfrak{p}_1\dots\mathfrak{p}_r)^2\mathrm{Nm}_{F/\mathbb{Q}}(\mathfrak{a})^4,
\end{eqnarray}
where $\mathfrak{p}_1\dots\mathfrak{p}_r$ are prime ideals in $F$ that are ramified in $F/B$ and $\mathfrak{a}$ is an ideal in $F$ equivalent to the different ideal in $F$ modulo squares in the narrow class group.
\end{theorem}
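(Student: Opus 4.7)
The plan is to derive the characterization by combining Proposition \ref{volume} with basic facts about reduced discriminants of maximal orders and about reduced norms of ideals in quaternion algebras. Starting from Proposition \ref{volume}, one obtains
$$\det(\Lambda_{(\mathcal{I},\alpha)}) = \mathrm{Nm}_{F/\mathbb{Q}}(D_{\mathcal{O}})^{2}\,\mathrm{Nm}_{F/\mathbb{Q}}\bigl((\alpha)\,\mathfrak{d}_{F/\mathbb{Q}}\,\mathrm{Nm}_B(\mathcal{I})\bigr)^{2},$$
after using the identity $d_F = \mathrm{Nm}_{F/\mathbb{Q}}(\mathfrak{d}_{F/\mathbb{Q}})$ for the different $\mathfrak{d}_{F/\mathbb{Q}}$ of $F/\mathbb{Q}$. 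Since $\mathcal{O}$ is maximal, $D_{\mathcal{O}}=D_B=\mathfrak{p}_1\cdots\mathfrak{p}_r$, which identifies the factor $\mathrm{Nm}_{F/\mathbb{Q}}(\mathfrak{p}_1\cdots\mathfrak{p}_r)^{2}$ of \eqref{det-bayer-martinet}.

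The second step is to exhibit an integral ideal $\mathfrak{a}\subset\mathbb{Z}_F$ realising the remaining fourth-power factor: we want
$$\mathrm{Nm}_{F/\mathbb{Q}}(\mathfrak{a})^{4} = \mathrm{Nm}_{F/\mathbb{Q}}\bigl((\alpha)\,\mathfrak{d}_{F/\mathbb{Q}}\,\mathrm{Nm}_B(\mathcal{I})\bigr)^{2},$$
that is, $\mathfrak{a}^{2}\sim (\alpha)\,\mathfrak{d}_{F/\mathbb{Q}}\,\mathrm{Nm}_B(\mathcal{I})$ up to squares in $F^{\times}$. Because $\alpha$ is totally positive, $(\alpha)$ is trivial in the narrow class group $\mathrm{Cl}^{+}(F)$, so the task reduces to finding a square root of $\mathfrak{d}_{F/\mathbb{Q}}\,\mathrm{Nm}_B(\mathcal{I})$ in $\mathrm{Cl}^{+}(F)$ lying in the coset $\mathfrak{d}_{F/\mathbb{Q}}\cdot\mathrm{Cl}^{+}(F)^{2}$.

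The main obstacle is the arithmetic input: verifying that the narrow class of $\mathrm{Nm}_B(\mathcal{I})$ is a square for every left ideal $\mathcal{I}$ of the maximal order $\mathcal{O}$. This is an instance of the Eichler-type norm theorem for ideals, which I would prove by localizing. At each split (matrix) prime $\mathfrak{p}$, every left $\mathcal{O}_{\mathfrak{p}}$-ideal is principal and the local reduced norm is surjective onto $F_{\mathfrak{p}}^{\times}$, so the local norm can be adjusted to any unit times a square; at each ramified prime the local reduced norm lies in an even-power coset of a uniformizer, hence in $F_{\mathfrak{p}}^{\times\,2}$ modulo units. A local-global patch on the narrow class group then shows $\mathrm{Nm}_B(\mathcal{I})\in\mathrm{Cl}^{+}(F)^{2}$ and produces a square root $\mathfrak{b}$, after which $\mathfrak{a}:=\mathfrak{d}_{F/\mathbb{Q}}\mathfrak{b}$ gives the required ideal.

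For the converse direction implicit in the phrase ``of the form'', I would reverse the construction: any class representative $\mathfrak{a}\equiv\mathfrak{d}_{F/\mathbb{Q}}\pmod{\mathrm{Cl}^{+}(F)^{2}}$ determines, up to totally positive scaling, the reduced norm of a suitable left ideal $\mathcal{I}$ (again via the local-global description), and any residual totally positive principal factor is absorbed into the choice of $\alpha$. Combining the two directions yields \eqref{det-bayer-martinet}.
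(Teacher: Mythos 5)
First, a point of comparison: the paper does not prove this statement at all --- it is quoted from Bayer and Martinet \cite{semisimple} --- so your attempt has to stand on its own. It does not, and the decisive problem is your second step. You arrive at a factor $\mathrm{Nm}_{F/\mathbb{Q}}\bigl((\alpha)\,\mathfrak{d}_{F/\mathbb{Q}}\,\mathrm{Nm}_B(\mathcal{I})\bigr)^{2}$ and try to rewrite it as $\mathrm{Nm}_{F/\mathbb{Q}}(\mathfrak{a})^{4}$ by producing an ideal $\mathfrak{a}$ with $\mathfrak{a}^{2}$ equivalent to $(\alpha)\mathfrak{d}_{F/\mathbb{Q}}\mathrm{Nm}_B(\mathcal{I})$ in the narrow class group, ``up to squares in $F^{\times}$''. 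But $\det(\Lambda_{(\mathcal{I},\alpha)})$ is a specific rational number, and equivalence in a class group does not preserve absolute norms: if $\mathfrak{a}^{2}=(\gamma)(\alpha)\mathfrak{d}_{F/\mathbb{Q}}\mathrm{Nm}_B(\mathcal{I})$ then $\mathrm{Nm}(\mathfrak{a})^{4}=\mathrm{Nm}(\gamma)^{2}\,\mathrm{Nm}\bigl((\alpha)\mathfrak{d}_{F/\mathbb{Q}}\mathrm{Nm}_B(\mathcal{I})\bigr)^{2}$, which is not the quantity you need unless $\gamma$ is a unit. No Eichler-type or local-global argument can repair this, because the claim is a numerical identity, not an identity of ideal classes. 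A concrete symptom: for $\mathcal{I}=\mathcal{O}$, $\alpha=1$, $F=\mathbb{Q}(\sqrt{5})$, your formula gives determinant $25$ and would force $\mathrm{Nm}(\mathfrak{a})^{4}=25$, which has no integral solution.

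The source of the trouble is your first displayed equation, which is neither Proposition \ref{volume} as stated (you have silently squared the $\mathrm{Nm}_{F/\mathbb{Q}}(D_{\mathcal{O}})$ factor) nor the correct Gram determinant. Transferring the rank-$4$ form $\alpha\,\mathrm{Tr}_B(x\overline{y})$ from $\mathbb{Z}_F$ to $\mathbb{Z}$ multiplies the discriminant by $d_F^{4}$, not $d_F^{2}$, and contributes $\mathrm{Nm}_{F/\mathbb{Q}}(\alpha)^{4}$ and $\mathrm{Nm}_{F/\mathbb{Q}}(\mathrm{Nm}_B(\mathcal{I}))^{4}$; the result is $\det(\Lambda_{(\mathcal{I},\alpha)})=\mathrm{Nm}_{F/\mathbb{Q}}(D_B)^{2}\,\mathrm{Nm}_{F/\mathbb{Q}}\bigl((\alpha)\mathfrak{d}_{F/\mathbb{Q}}\mathrm{Nm}_B(\mathcal{I})\bigr)^{4}$, consistent with the Hurwitz order over $\mathbb{Q}$, whose Gram determinant is $4=\mathrm{Nm}(2\mathbb{Z})^{2}$, and with the $\mathbb{Q}(\sqrt{5})$ example above, whose Gram determinant is $625=\mathrm{Nm}((\sqrt{5}))^{4}$. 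So the fourth power is already present with $\mathfrak{a}=(\alpha)\mathfrak{d}_{F/\mathbb{Q}}\mathrm{Nm}_B(\mathcal{I})$ on the nose, and your entire ``square root in $\mathrm{Cl}^{+}(F)$'' apparatus addresses a difficulty that does not arise. What genuinely remains is to show that the narrow class of this $\mathfrak{a}$ lies in $[\mathfrak{d}_{F/\mathbb{Q}}]\cdot\mathrm{Cl}^{+}(F)^{2}$; there your local analysis is also off, since at a ramified prime the maximal two-sided local ideal $\mathfrak{P}$ has $\mathrm{Nm}_B(\mathfrak{P})=\mathfrak{p}$, of odd valuation, contradicting your claim that local reduced norms at ramified primes are even powers of a uniformizer.
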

Theorem \ref{Bayer-Martinet} gives a way to construct unimodular lattices from quaternion algebras. In the next section we will further explore this result and present explicit constructions of some known lattices over infinitely many totally real number fields.

\section{Extremal even unimodular lattices from infinitely many quaternion algebras}\label{sec:extremal}


In this section  use the Landau symbols $O$ and $o$ with their usual meanings.
It is well known that even unimodular lattices (\emph{i.e.}, integral even lattices of determinant $1$) exist only in dimensions $8k$ where $k$ is a positive integer.
Furthermore, if $\Lambda$ is an $n$-dimensional even unimodular lattice then
\begin{equation}\label{extremal}
   \lambda_1(\Lambda)\leq 2+2 \left\lfloor\frac{n}{24}\right\rfloor. 
\end{equation}
Even unimodular lattices with the length of the shortest vector achieving the bound in (\ref{extremal}) are called \textit{extremal}. 
The following table summarises some known results about extremal even unimodular lattices.  
\begin{table}[ht]
    \centering
    \begin{tabular}{|c|c|c|c|c|c|c|c|c|}
    \hline
    $n$  & 8 & 16 & 24 & 32 & 40 & 48 & 72 & 80       \\ \hline
    $\lambda_1(\Lambda)$  & 2 & 2  & 4  & 4 & 4 & 6 & 8 & 8  \\ \hline
    \begin{tabular}[c]{@{}c@{}}Number \\ of extremal \\ lattices\end{tabular} & 1 & 2  & 1  & $\geq 10^{7}$ & $\geq 10^{51}$ & $\geq 5$ & $\geq 1$ & $\geq 4$ \\ \hline
    \end{tabular}
    \caption{Extremal even unimodular lattices}\label{ext}
\end{table}

    In this section we will use the lattice construction from Section \ref{1st}.
    Note that the lattices constructed in Section \ref{1st} are of dimension $4n$, thus, the degree $n$ should necessarily be even in order to obtain even unimodular lattices using this method.
    
    Before proceeding with our construction, we will need a few definitions from algebraic number theory.
    We consider $F$ to be a totally real number field of even degree $n$, discriminant $d_F$, $\alpha$ totally positive in $F$, and $\mathcal{O}$ a maximal order in $B=\left(\frac{-1,-1}{F}\right)$.
    We define the \textit{dual} of $\mathbb{Z}_F$ to be 
    $$\mathbb{Z}_F^{\vee}=\left\{x\in F~:~\mathrm{Tr}_{F/\mathbb{Q}}(x\mathbb{Z}_F)\subset \mathbb{Z}\right\}.$$
    By construction, if $\alpha\in \mathbb{Z}_F^{\vee}$, then the lattice $\Lambda_{(\mathcal{O},\alpha)}$ is integral and consequently even. If we additionally choose $\alpha$ such that $ \det(\Lambda_{(\mathcal{O},\alpha)})=1$, then the lattice $\Lambda_{(\mathcal{O},\alpha)}$ will be clearly even unimodular.
    Using Proposition \ref{volume}, we get
    $$\det(\Lambda_{(\mathcal{O},\alpha)})=\mathrm{Nm}_{F/\mathbb{Q}}(\alpha)^2 d_F^2\mathrm{Nm}_{F/\mathbb{Q}}(D_\mathcal{O}).$$
    Assume that $n>2$ is even and let $n_2=[F_\mathfrak{p}:\mathbb{Q}_2]$, where $\mathfrak{p}$ is a prime above $2$. Then, by Proposition \ref{discB}, we have $D_B=\mathbb{Z}_F$ if and only if $n$ is a power of two or $n_2$ is even. In this case, 
    $$\det(\Lambda_{(\mathcal{O},\alpha)})=\mathrm{Nm}_{F/\mathbb{Q}}(\alpha)^2 d_F^2.$$
    Hence, the lattice $\Lambda_{(\mathcal{O},\alpha)}$ is even unimodular if the norm of $\alpha$ is  $1/d_F$.
      
      Indeed, Theorem \ref{det-bayer-martinet} shows that it is necessary that $B$ is ramified at the finite places of $F$ so that the lattices obtained from a maximal order in  $B$ are unimodular. Note that this condition implies that $[F:\mathbb{Q}]$ is even. Moreover, we need a totally positive element $\alpha\in F$ such that $\alpha$ generates an ideal of the form $\mathrm{Nm}_{B}(\mathcal{I})(\mathbb{Z}_F^{\vee})^{-1}$, where $\mathcal{I}$ is a left ideal in $\mathcal{O}$. In the following, we will investigate these conditions in order to obtain explicit unimodular lattices.
    
    The dual $\mathbb{Z}_F^{\vee}$ is a fractional ideal in $F$ with $\mathbb{Z}_F\subset \mathbb{Z}_F^{\vee}$. Thus, its inverse is an integral ideal in $\mathbb{Z}_F$.
    The \textit{different ideal} of $F$ is defined as 
    the integral ideal $\mathfrak{D}_F:=(\mathbb{Z}_F^{\vee})^{-1}$.
    An important property of the different ideal is that 
    $\mathrm{Nm}_{F/\mathbb{Q}}(\mathfrak{D}_F)=d_F$.
    Now let $F=\Q(\beta)$ and $f$ be the minimal polynomial of $\beta$ in $\Q[X]$. It is well known that $(f'(\beta))$, the ideal generated by the derivative of $f$, is a subset of $\mathfrak{D}_F$. Furthermore, 
     \begin{equation}\label{fundeq}
         \mathrm{Nm}_{F/\mathbb{Q}}(f'(\beta))=(-1)^{\frac{n(n-1)}{2}}[\Z_F:\Z[\beta]]^2d_F.
     \end{equation}
     Let us first investigate the case where $[\Z_F:\Z[\beta]]=1$. In this case, the number field $F$ is said to be  $\textit{monogenic}$. In other words, the basis $\{1,\beta,\beta^2,\dots,\beta^{n-1}\}$ generates $\Z_F$. Such a basis is called a \emph{power basis}. Quadratic and cyclotomic fields are examples of such number fields.
    The literature on monogenic fields is extensive, and deciding which fields are monogenic is a long standing problem in algebraic number theory. For a detailed survey on the topic see \cite{gaal2002diophantine}.
     A classical argument shows that,
    if $F$ is monogenic with generator $\beta$ and minimal polynomial $f$, then
    $\mathfrak{D}_F=(f'(\beta))$.
    Consequently,
    $\mathrm{Nm}_{F/\mathbb{Q}}(f'(\beta))=d_F$.
    We will use this fact to describe a strategy to construct even unimodular lattices from quaternion algebras over totally real number fields of even degree such that  $\mathbb{Z}_F=\mathbb{Z}[\beta]$ for some $\beta\in\mathbb{Z}_F$ with minimal polynomial $f$.

    \begin{theorem}\label{power2}
        Let $F$ be a totally real monogenic number field of even degree $n=2^t$, $t>0$, or $n_2=[F_\mathfrak{p}:\mathbb{Q}_2]$ even, where $\mathfrak{p}$ is a prime above $2$.
        Let $\varepsilon\in\mathbb{Z}_F$ be a unit such that
        $\varepsilon\cdot f'(\beta)$ is totally positive.
        Then the lattice $\Lambda_{(\mathcal{O},\alpha)}$ is an even unimodular lattice for $\alpha=\frac{1}{\varepsilon\cdot f'(\beta)}$,
        where $\mathcal{O}$ is any maximal order in $B=\left(\frac{-1,-1}{F}\right)$.
    \end{theorem}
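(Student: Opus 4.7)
The plan is to verify three properties in sequence: (i) $\alpha$ is totally positive so that the twisted embedding $\sigma_{B,\alpha}$ makes sense; (ii) $\alpha\in\mathbb{Z}_F^{\vee}$ so that $\Lambda_{(\mathcal{O},\alpha)}$ is integral, and hence even by the proposition preceding Theorem~\ref{Bayer-Martinet}; and (iii) $\det(\Lambda_{(\mathcal{O},\alpha)})=1$ via the volume formula (\ref{det-formula}). Once these three are in place, the lattice is forced to be even unimodular.

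For (i), $\alpha=(\varepsilon f'(\beta))^{-1}$ is a unit times the reciprocal of something totally positive by hypothesis, hence totally positive itself (total positivity is preserved under taking reciprocals and is compatible with the hypothesis on $\varepsilon f'(\beta)$). For (ii), I would invoke the monogenicity assumption: since $\mathbb{Z}_F=\mathbb{Z}[\beta]$ with minimal polynomial $f$, the different satisfies $\mathfrak{D}_F=(f'(\beta))$, so the codifferent is $\mathbb{Z}_F^{\vee}=(f'(\beta))^{-1}\mathbb{Z}_F$. Because $\varepsilon$ is a unit in $\mathbb{Z}_F$, we have $\alpha=\varepsilon^{-1}\cdot (f'(\beta))^{-1}\in \mathbb{Z}_F^{\vee}$, as required.

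For (iii), the hypothesis that $n=2^t$ or $n_2$ is even puts us exactly in the case of Proposition~\ref{discB} where $D_B=\mathbb{Z}_F$. Since $\mathcal{O}$ is maximal, $D_{\mathcal{O}}=D_B=\mathbb{Z}_F$, so $\mathrm{Nm}_{F/\mathbb{Q}}(D_{\mathcal{O}})=1$. Applying Proposition~\ref{volume} with $\mathcal{I}=\mathcal{O}$ (so $\mathrm{Nm}_B(\mathcal{I})=\mathbb{Z}_F$) gives
\begin{equation*}
\det(\Lambda_{(\mathcal{O},\alpha)})=\mathrm{Nm}_{F/\mathbb{Q}}(\alpha)^2\, d_F^2.
\end{equation*}
Now $\mathrm{Nm}_{F/\mathbb{Q}}(\alpha)=\mathrm{Nm}_{F/\mathbb{Q}}(\varepsilon)^{-1}\mathrm{Nm}_{F/\mathbb{Q}}(f'(\beta))^{-1}$, and the unit $\varepsilon$ has norm $\pm 1$, while the monogenicity identity (\ref{fundeq}) with $[\mathbb{Z}_F:\mathbb{Z}[\beta]]=1$ yields $\mathrm{Nm}_{F/\mathbb{Q}}(f'(\beta))=\pm d_F$. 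Squaring kills all signs and we obtain $\det(\Lambda_{(\mathcal{O},\alpha)})=d_F^{-2}\cdot d_F^{2}=1$.

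The only real obstacle is ensuring the existence of such a unit $\varepsilon$ and confirming the identification of the codifferent; both rely crucially on the monogenicity hypothesis. Existence of $\varepsilon$ making $\varepsilon f'(\beta)$ totally positive is a sign condition at the $n$ real places, which is exactly the kind of statement controlled by the signature map on the unit group (it succeeds precisely when the totally positive units generate the required sign pattern modulo squares); the statement simply assumes such an $\varepsilon$, so no additional work is needed here. Everything else reduces to the standard computation above, so the theorem follows by assembling (i)--(iii).
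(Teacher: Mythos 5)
Your proof is correct and follows essentially the same route as the paper's: monogenicity identifies the different as $(f'(\beta))$, Proposition \ref{discB} forces $D_B=\mathbb{Z}_F$, and the volume formula together with $\lvert\mathrm{Nm}_{F/\mathbb{Q}}(\alpha)\rvert=1/d_F$ yields determinant one. You simply spell out in more detail the verification that $\alpha=\frac{1}{\varepsilon f'(\beta)}$ is totally positive, lies in $\mathbb{Z}_F^{\vee}$, and has the right norm, which the paper compresses into a single sentence.
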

    \begin{proof}
    Since
    $\mathbb{Z}_F$ is monogenic, we have that $\mathfrak{D}_F=(f'(\beta))$, where $f'$ is the derivative of $f$.
    If $[F:\mathbb{Q}]=2^t$, $t>0$, or $n_2=[F_\mathfrak{p}:\mathbb{Q}_2]$ even, the discriminant of $B$ is $\mathbb{Z}_F$ according to Proposition \ref{discB}.
    
    Using Proposition \ref{volume} we obtain
    $\det(\Lambda_{(\mathcal{O},\alpha)})=d_F^{2}\mathrm{Nm}_{F/\mathbb{Q}}(\alpha)^2$.
    In order to construct an unimodular lattice we need to find a totally positive element $\alpha\in \mathbb{Z}_F^{\vee}$ such that $\mathrm{Nm}_{F/\mathbb{Q}}(\alpha)=\frac{1}{d_F}$. By assumption, the element $\frac{1}{\varepsilon\cdot f'(\beta)}$ has these properties.
    \end{proof}
    \begin{remark}
    If $n_2$ is odd, then $D_B=2\mathbb{Z}_F$. In this case,
    $\det(\Lambda_{(\mathcal{O},\alpha)})=2^n d_F^2\mathrm{Nm}_{F/\mathbb{Q}}(\alpha)^2$.
    Consequently, to obtain an even integral lattice $\Lambda_{(\mathcal{O},\alpha)}$ it is  enough to find $\alpha\in\Z_F$ such that $\mathrm{Nm}_{F/\Q}(\alpha)=2^{\frac{n}{2}}$ and $\alpha\varepsilon f(\beta)'$ is totally positive, for some unit $\varepsilon\in\Z_F$.
    \end{remark}
    
In the following, we will explicitly construct infinite families of even unimodular lattices using the above construction. It is proved in \cite{semisimple} that it is impossible to construct the Leech using the same construction. In fact, the authors proved the following theorem.
\begin{theorem}
    Even unimodular lattices of dimension $n<48$ obtained from quaternion algebras are of minimal norm $2$. In particular, it is impossible to construct the Leech lattice from a quaternion algebra over a totally real number field of degree $6$.
\end{theorem}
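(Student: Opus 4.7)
The plan is to extract from the $4n$-dimensional lattice $\Lambda=\Lambda_{(\mathcal{O},\alpha)}$ a rank-$n$ sublattice whose arithmetic is controlled by Hermite's constant $\gamma_n$, and to exploit $\gamma_n<2$ for small $n$. Concretely, the natural inclusion $\mathbb{Z}_F\hookrightarrow\mathcal{O}$, $\beta\mapsto\beta$, yields a sublattice $L:=\sigma_{B,\alpha}(\mathbb{Z}_F)\subset\Lambda$ of rank $n=[F:\mathbb{Q}]$. Since any $\beta\in F\subset B$ satisfies $\mathrm{Nm}_B(\beta)=\beta^2$, the inner product (\ref{inner}) restricts to
\[
\|\sigma_{B,\alpha}(\beta)\|^2 = 2\,\mathrm{Tr}_{F/\mathbb{Q}}(\alpha\beta^2) =: 2Q(\beta),
\]
so $L$ is a rescaling of an integral positive definite quadratic form $(\mathbb{Z}_F,Q)$ (positivity from $\alpha$ totally positive, integrality from $\alpha\in\mathbb{Z}_F^{\vee}$). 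In particular, $\lambda_1(\Lambda)\leq \lambda_1(L)=2\,\lambda_1(Q)$.

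The key calculation is that $Q$ is unimodular. On a $\mathbb{Z}$-basis $\{e_i\}$ of $\mathbb{Z}_F$ the Gram matrix $(\mathrm{Tr}_{F/\mathbb{Q}}(\alpha e_i e_j))$ factors as $M^{\top}\mathrm{diag}(\sigma_i(\alpha))M$ with $M=(\sigma_i(e_j))$, so its determinant equals $d_F\cdot\mathrm{Nm}_{F/\mathbb{Q}}(\alpha)$, which is $1$ under the unimodularity hypothesis $\mathrm{Nm}_{F/\mathbb{Q}}(\alpha)=1/d_F$ coming from Proposition~\ref{volume} and Theorem~\ref{power2}. Hermite's inequality then gives $\lambda_1(Q)\leq \gamma_n$, and integrality of $Q$ forces $\lambda_1(Q)\in\mathbb{Z}_{>0}$. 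For $n\in\{2,4,6\}$ one has $\gamma_n<2$ (for instance $\gamma_6=(64/3)^{1/6}\approx 1.67$), hence $\lambda_1(Q)=1$; a minimizer $\beta\in\mathbb{Z}_F$ produces a lattice vector of squared norm $2$ in $\Lambda$, and combined with evenness this yields $\lambda_1(\Lambda)=2$. Specializing to $n=6$ settles the ``in particular'' claim: any even unimodular lattice of our form in dimension $24$ has minimum $2$, whereas the Leech lattice has minimum $4$, so it cannot be obtained in this way.

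For the remaining degrees $n=8,10$ (dimensions $32$ and $40$), $\gamma_n\geq 2$ and Hermite's constant alone is insufficient. Here the plan is to invoke the classification of positive definite integral unimodular lattices of rank at most $10$: the only one with minimum $\geq 2$ is $E_8$ in rank $8$. This immediately handles $n=10$, forcing $\lambda_1(Q)=1$. The main obstacle is $n=8$, where $Q$ could a priori be isometric to $E_8$; ruling this out requires exploiting the additional arithmetic structure carried by $Q$—the commutative $\mathbb{Z}_F$-multiplication on the underlying lattice and, when $F/\mathbb{Q}$ is Galois, the induced Galois action—to argue that these constraints are incompatible with an $E_8$-isometry. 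This more delicate step is where I expect the bulk of the work to lie, and is the content of the cited result of Bayer and Martinet \cite{semisimple}.
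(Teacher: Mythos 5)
First, a point of comparison: the paper does not actually prove this statement. It is imported verbatim from Bayer and Martinet \cite{semisimple}, so there is no internal proof to measure your argument against; yours is an independent reconstruction. Its core mechanism is sound and genuinely proves most of the claim. Since $1\in\mathcal{O}$, the image $\sigma_{B,\alpha}(\mathbb{Z}_F)$ is a rank-$n$ sublattice of $\Lambda_{(\mathcal{O},\alpha)}$; conjugation fixes the centre, so $\mathrm{Tr}_B(\beta\overline{\beta})=2\beta^2$ and the form (\ref{inner}) restricts to $2\,\mathrm{Tr}_{F/\mathbb{Q}}(\alpha\beta^2)=2Q(\beta)$; the Gram determinant $d_F\,\mathrm{Nm}_{F/\mathbb{Q}}(\alpha)$ of $Q$ equals $1$ exactly when $\Lambda_{(\mathcal{O},\alpha)}$ is unimodular, so $(\mathbb{Z}_F,Q)$ is a positive definite integral unimodular lattice of rank $n$. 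Hermite's bound $\gamma_n<2$ for $n\leq 7$, and for $n=10$ the classification of unimodular lattices of rank at most $10$ (only $\mathbb{Z}^{10}$ and $E_8\perp\mathbb{Z}^2$, both of minimum $1$), force $\lambda_1(Q)=1$ and hence a vector of norm $2$ in $\Lambda_{(\mathcal{O},\alpha)}$. This settles dimensions $8$, $16$, $24$ and $40$, and in particular gives a complete and correct proof of the Leech-lattice assertion, which is the only part of the theorem the paper actually uses.

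There are two gaps, one substantive and one minor. The substantive one, which you flag yourself, is dimension $32$: for $n=8$ the classification permits $Q\cong E_8$, of minimum $2$, in which case your sublattice only produces a vector of norm $4$. This case cannot be dismissed, because $E_8$ really does occur as a trace form $(\mathbb{Z}_F,\mathrm{Tr}_{F/\mathbb{Q}}(\alpha xy))$ over totally real octic fields (for instance over the maximal real subfield of $\mathbb{Q}(\zeta_{32})$, the standard ``rotated $E_8$'' construction), so the obstruction must come from the noncommutative part of $\mathcal{O}$ rather than from its centre --- e.g.\ by locating a short vector in a larger subring such as $\mathbb{Z}_F[i]$ --- and you defer exactly this step to \cite{semisimple}. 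As it stands the proposal is therefore a proof of the theorem for all relevant dimensions except $32$, not of the full statement. The minor gap is that the theorem, like Theorem \ref{Bayer-Martinet}, concerns lattices $\Lambda_{(\mathcal{I},\alpha)}$ attached to arbitrary one-sided ideals $\mathcal{I}$, whereas your argument uses $1\in\mathcal{I}$; for a general ideal one should restrict to the fractional ideal $\mathcal{I}\cap F$ of $F$ and redo the determinant bookkeeping via Proposition \ref{volume}, which works but is not automatic and should be written out.
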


\subsection{Dimension 8}\cami{This paragraph again makes it sound like nothing new happens in this section. Can you emphasize the contribution a bit here?}
 Lattices obtained from quaternion algebras that are equivalent to $E_8$ are characterised in \cite{semisimple}. In this section, we will present an infinite family of lattices similar to $E_8$ from infinitely many quaternion algebras with explicit bases for the underlying maximal orders.

    Theorem \ref{power2} covers number fields of degree $n>2$. We will investigate quadratic number fields and use a similar strategy  to prove a density result for the set of number fields for which $\Lambda_{(\mathcal{O},\alpha)}$ is the $E_8$ lattice.
 It is well known \cite{Conway-Sloane}*{Ch. 4} that the lattice $E_8$ is the only even unimodular lattice of dimension $8$ up to isometry. Hence, to construct $E_8$ it is enough to find a quadratic field $F$ with $D_B=\mathbb{Z}_F$ and $F$ contains a totally positive element $\alpha$ such that $\mathrm{Nm}_{F/\mathbb{Q}}(\alpha)=\frac{1}{d_F}$.

    \begin{theorem}\label{thm-E8}
        Let $D\equiv 1 \pmod4$ be a square-free positive integer of the form $D=s^2+4$ and let $F=\mathbb{Q}(\sqrt{D})$ denote a real quadratic field.
        Let $B=\big(\frac{-1,-1}{F}\big)$ denote the quaternion algebra over $F$ and let $\mathcal{O}$ denote the maximal order in $B$ given by Lemma \ref{d1}.
        Let $\alpha=\frac{\varepsilon}{\sqrt{D}}$, with $\varepsilon = \frac{s+\sqrt{D}}{2}$.
        Then the lattice $\Lambda_{(\mathcal{O},\alpha)}$ is similar to $E_8$ and its generator matrix is given by $(\ref{gen_mat})$.
    \end{theorem}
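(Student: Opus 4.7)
The plan is to verify that $\alpha$ satisfies three conditions ensuring $\Lambda_{(\mathcal{O},\alpha)}$ is an even unimodular lattice of dimension~$8$, and then invoke the uniqueness of $E_8$. Specifically, I will check that (i) $\alpha$ is totally positive, (ii) $\alpha\in\mathbb{Z}_F^{\vee}$ (giving integrality and evenness via the inner product formula~(\ref{inner})), and (iii) $\mathrm{Nm}_{F/\mathbb{Q}}(\alpha)=1/d_F$, which combined with $D_B=\mathbb{Z}_F$ forces the volume formula to output one.

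First I would observe that the hypothesis $D=s^{2}+4$ is engineered so that $\varepsilon=\frac{s+\sqrt{D}}{2}$ is a unit of norm $-1$: indeed $\mathrm{Nm}_{F/\mathbb{Q}}(\varepsilon)=\frac{s^{2}-D}{4}=-1$. Since $D\equiv 1\pmod 4$ gives $d_F=D$, this at once yields $\mathrm{Nm}_{F/\mathbb{Q}}(\alpha)=\mathrm{Nm}(\varepsilon)/\mathrm{Nm}(\sqrt{D})=-1/(-D)=1/d_F$, settling~(iii). For~(i), the two real embeddings send $\alpha$ to $\frac{s+\sqrt{D}}{2\sqrt{D}}$ and $\frac{\sqrt{D}-s}{2\sqrt{D}}$, both positive because $\sqrt{D}>|s|$ (from $D=s^{2}+4>s^{2}$). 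For~(ii), since $F$ is monogenic with generator $\omega=(1+\sqrt{D})/2$ and $f'(\omega)=\sqrt{D}$, the different of $F$ is $\mathfrak{D}_{F}=(\sqrt{D})$, so $\mathbb{Z}_F^{\vee}=\frac{1}{\sqrt{D}}\mathbb{Z}_F$, which clearly contains $\alpha=\varepsilon/\sqrt{D}$.

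Next I would check that $D_B=\mathbb{Z}_F$ so that the volume formula simplifies. The key observation is that $D$ square-free with $D\equiv 1\pmod 4$ forces $s$ odd, hence $s^{2}\equiv 1\pmod 8$ and $D\equiv 5\pmod 8$. By Proposition~\ref{discB}(a), $D_B=\mathbb{Z}_F$. Combining with Proposition~\ref{volume} applied to the maximal order $\mathcal{O}$ of Lemma~\ref{d1},
\[\det(\Lambda_{(\mathcal{O},\alpha)})=\mathrm{Nm}_{F/\mathbb{Q}}(\alpha)^{2}\,d_F^{2}\,\mathrm{Nm}_{F/\mathbb{Q}}(D_{\mathcal{O}})=(1/D)^{2}\cdot D^{2}\cdot 1=1.\]
Hence $\Lambda_{(\mathcal{O},\alpha)}$ is an even unimodular lattice of dimension $4n=8$, and the classical uniqueness of such lattices (\cite{Conway-Sloane}, Ch.~4) forces it to be isometric, hence similar, to $E_8$. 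The explicit generator matrix follows by substituting the basis of Lemma~\ref{d1} into the template~(\ref{gen_mat}).

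The main conceptual point, rather than any difficult computation, is recognising that the single Diophantine form $D=s^{2}+4$ simultaneously encodes three arithmetic facts needed for the argument: that $d_F=D$, that $\mathrm{Nm}(\varepsilon)=-1$ (providing precisely the right norm $\mathrm{Nm}(\alpha)=1/d_F$), and that $D\equiv 5\not\equiv 1\pmod 8$ (yielding $D_B=\mathbb{Z}_F$ via Proposition~\ref{discB}(a)). Once these three alignments are isolated, the rest of the verification is routine through the volume and inner product formulas.
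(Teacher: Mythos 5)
Your proposal is correct and follows essentially the same route as the paper: verify that $\alpha=\varepsilon/\sqrt{D}$ is totally positive of norm $1/d_F$, that $D\equiv 5\pmod 8$ forces $D_B=\mathbb{Z}_F$, apply the volume formula to get determinant one, and invoke the uniqueness of the even unimodular lattice in dimension $8$. If anything, your justification of integrality via $\alpha\in\mathbb{Z}_F^{\vee}=\frac{1}{\sqrt{D}}\mathbb{Z}_F$ (computed from the different of the monogenic generator $(1+\sqrt{D})/2$) is tighter than the paper's remark that $\mathrm{Tr}_{F/\mathbb{Q}}(\alpha)=1$.
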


    \begin{proof}
        Since $D\not\equiv 1\pmod8$, according to Prop. \ref{discB} we have $D_B=\mathbb{Z}_F$, and
        according to Lemma \ref{d1}, we have that
        $$\mathcal{O}=\mathbb{Z}_F\left[1,i,\frac{\sqrt{D}+1}{4}+\frac{\sqrt{D}+3}{4}i+\frac{j}{2}, \frac{\sqrt{D}+3}{4}+\frac{\sqrt{D}+1}{4}i + \frac{k}{2}\right]$$
        is a maximal order in B.
        Using (\ref{det-formula}), the lattice $\Lambda_{(\mathcal{O},\alpha)}$ twisted by some totally real $\alpha\in F$ has
        $\det(\Lambda_{(\mathcal{O},\alpha)}) = d_F^4 Nm_{F/\mathbb{Q}}(\alpha)^4$,
        where $d_F$ is the discriminant of $F$.
        After the previous remark, for the lattice $\Lambda_{(\mathcal{O},\alpha)}$ to be similar to $E_8$, we need to find a totally positive element $\alpha\in F$ of norm $\mathrm{Nm}_{F/\mathbb{Q}}(\alpha)=\frac{1}{d_F}=\frac{1}{d}$ and with $\mathrm{Tr}_{F/\mathbb{Q}}(\alpha \mathrm{Nm}_B(x))\in\mathbb{Z}$ for all $x\in\mathcal{O}$.
        
        Consider $\varepsilon = \frac{s+\sqrt{D}}{2}$. It is easy to check that $\varepsilon$ is a unit in $\mathbb{Z}_F$ of norm $-1$. We can assume that $\varepsilon>0$ without loss of generality, as we can replace $\varepsilon$ by $-\varepsilon$ if necessary.
        Take $\alpha=\frac{\varepsilon}{\sqrt{D}}$. 
        By assumption we have that
        $\mathrm{Nm}_F(\varepsilon) = \varepsilon\overline{\varepsilon} = -1$, 
        thus $\overline{\varepsilon}<0$ and $\overline{\alpha}=\frac{\overline{\varepsilon}}{-\sqrt{D}}>0$. 
        Hence, $\alpha$ is totally positive and $\mathrm{Nm}_F(\alpha) = \frac{1}{D}$.
        Furthermore, $\mathrm{Tr}_{F/\mathbb{Q}}(\alpha)=1$, so $\mathrm{Tr}_{F/\mathbb{Q}}(\alpha \mathrm{Nm}_B(x))\in\mathbb{Z}$.
        
        For the case $D|\not\equiv 1\pmod 4$, we consider $D=s^2+1$, $\varepsilon=s+\sqrt{D}$ and proceed in a similar way.
    \end{proof}
    
It is shown in \cite{ester} that the set of integers $s$ such that $s^2+4$ is square-free has a positive density. Hence the following corollary.

\begin{corollary}\label{E88}
\cami{Should we be self-contained and add (here and in other similar thms): Let $\alpha$...} 
The set
    $$\mathcal{T}_{E_8}=\left\{ F \textrm{ quadratic totally real}\ \Big| \ \Lambda_{(\mathcal{O},\alpha)}\sim E_8\textrm{ where } \mathcal{O} \subseteq  B=\left(\frac{-1,-1}{F}\right) \textrm{ is a maximal order}\right\}$$
    is infinite.

\end{corollary}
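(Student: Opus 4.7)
The plan is to deduce the corollary directly from Theorem \ref{thm-E8} combined with the density statement cited from \cite{ester}. First, I would recall that Theorem \ref{thm-E8} produces, for every square-free positive integer $D$ of the form $D=s^2+4$ with $D\equiv 1\pmod 4$, a field $F=\mathbb{Q}(\sqrt{D})\in \mathcal{T}_{E_8}$ (together with an explicit $\alpha$ and a maximal order $\mathcal{O}$). So the task reduces to exhibiting infinitely many such $D$.

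Next I would verify the congruence condition on $s$. Since $s^2\equiv 0$ or $1\pmod 4$, the condition $D=s^2+4\equiv 1\pmod 4$ is equivalent to $s$ being odd. Thus I need infinitely many odd integers $s$ for which $s^2+4$ is square-free. The key input is the result of \cite{ester} that the set $\{s\in\mathbb{Z}_{>0} : s^2+4 \text{ is square-free}\}$ has positive density. Since the set of odd positive integers also has density $1/2$, a standard inclusion–exclusion (or a direct inspection of the sieve in \cite{ester}, since the obstructions to square-freeness modulo $4$ are controlled) shows that infinitely many odd $s$ satisfy the square-freeness condition; in particular the intersection is infinite.

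Finally, I would observe that distinct square-free $D$ yield non-isomorphic real quadratic fields $\mathbb{Q}(\sqrt{D})$, so the infinitely many admissible $s$ produce infinitely many distinct fields in $\mathcal{T}_{E_8}$. This completes the argument.

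The only mildly delicate step is ensuring that the positive-density set in \cite{ester} still contains infinitely many odd representatives; I expect this to be essentially immediate from the sieve proof in that reference, but if a black-box application is preferred one can restrict attention from the outset to $s\equiv 1\pmod 2$ and rerun the standard square-free sieve for $s^2+4$ along that arithmetic progression, which carries through without change.
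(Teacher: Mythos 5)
Your argument is correct and is essentially the paper's own (one-line) justification: Theorem \ref{thm-E8} combined with the positive-density result of \cite{ester} for square-free values of $s^2+4$, plus the observation that distinct square-free $D$ give distinct quadratic fields. The only step you flag as delicate is in fact automatic: if $s$ is even then $4\mid s^2+4$, so $s^2+4$ is never square-free; hence every $s$ in the positive-density set of \cite{ester} is already odd, and then $s^2+4\equiv 5\pmod 8$, so the hypothesis $D\equiv 1\pmod 4$ (indeed $D\not\equiv 1\pmod 8$) of Theorem \ref{thm-E8} holds without any further sieving.
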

\begin{remark}
   The infinite set in Corollary \ref{E88} is explicitly given by $\Q(\sqrt{D})$ and $\alpha$ as in the proof of Theorem \ref{thm-E8}.
\end{remark}

We will further obtain a lower bound on the number of number fields $F$ from which the $E_8$ lattice can be realised from a maximal order in $B=\big(\frac{-1,-1}{F}\big)$.
Let $F=\mathbb{Q}(\sqrt{D})$ be a real quadratic field, with $D>1$ a positive square-free integer. 
     We have $\mathfrak{D}_F=(t)$, with $t=2\sqrt{D}$ if $D\not\equiv 1\pmod4 $, or $t=\sqrt{D}$ if $D\equiv 1\pmod4$. Clearly, $t$ is not totally positive in both cases, thus, to apply a similar argument to the one in Theorem \ref{power2}, we need $F$ to contain a unit $\varepsilon$ of norm $-1$. This is equivalent to the negative Pell equation 
     \begin{equation}\label{pell}
         x^2-d_F y^2=-1
     \end{equation}
     being solvable in $\Z^2$. Note that if $p\equiv 3\pmod4$ for some $p\mid d_F$, then Equation (\ref{pell}) is not solvable, otherwise, $-1$ will be a square modulo $p$.
     In the following, we will reduce the problem of counting number fields $F$ such that $\Lambda_{(\mathcal{O},\alpha)}\sim E_8$ to counting the number of discriminants $d_F$ in some given set.
     A folklore theorem due to Hermite--Minkowski states that for a fixed degree $n$, there are only finitely many number fields of discriminant $d_F\leq X$ for any $X>1$. Thus, it is canonical to order number fields with respect to their discriminant.
     
     \tao{Define D2, check and fix things here.}
     We denote by $\mathcal{N}^{-}$ the set of discriminants $d_F$ such that $F$ contains a fundamental unit of norm $-1$. \cami{Define fundamental discriminant?}
     Let $$\mathcal{D}_2=\{d_F ~|~F \textrm{ quadratic field, }p\not\equiv3\pmod 4 \textrm{ for all } p|d_F\}.$$ 
     Note that $\mathcal{D}_2$ is a disjoint union of the subsets 
     $$\mathcal{D}_{\textrm{even}}=\{d_F\in \mathcal{D}_2~|~ d_F\equiv 0\pmod8 \}$$
     and 
     $$\mathcal{D}_{\textrm{odd}}=\{d_F\in \mathcal{D}_2~|~ d_F\equiv 1\pmod4 \}.$$
     Using the above notation, we have the following lemma.
     
  \begin{lemma}\label{fundalemma8}
      Let $F=\mathbb{Q}(\sqrt{D})$ be a real quadratic field such that $d_F\in \mathcal{D}_{\textrm{even}}\cap \mathcal{N}^{-} $ or $d_F\in \mathcal{D}_{\textrm{odd}}\cap \mathcal{N}^{-} $ with $d_F\not\equiv 1\pmod8$. Then
      there exists $\alpha\in F$ totally positive, such that $\Lambda_{(\mathcal{O},\alpha)}\sim E_8$ for  any maximal order $\mathcal{O}$ in $B=\left(\frac{-1,-1}{F}\right)$.
  \end{lemma}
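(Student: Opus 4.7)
The plan is to adapt the strategy in Theorem \ref{thm-E8}, but rather than picking a specific $D$ of the form $s^2+4$ I would work intrinsically with a generator of the different ideal $\mathfrak{D}_F$. First I would unpack the two mod-$8$ conditions in the hypothesis: by Proposition \ref{discB}(a), $D_B = \mathbb{Z}_F$ if and only if $d_F \not\equiv 1 \pmod 8$, and this is exactly what both $\mathcal{D}_{\textrm{even}}$ (since $d_F \equiv 0 \pmod 8$) and the explicit $d_F \not\equiv 1 \pmod 8$ restriction on $\mathcal{D}_{\textrm{odd}}$ guarantee. Combined with Proposition \ref{volume}, this reduces the unimodularity of $\Lambda_{(\mathcal{O},\alpha)}$ to finding a totally positive $\alpha \in \mathbb{Z}_F^{\vee}$ with $\mathrm{Nm}_{F/\mathbb{Q}}(\alpha) = 1/d_F$, for any maximal order $\mathcal{O} \subseteq B$.

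Next I would construct such an $\alpha$ explicitly. Since $F = \mathbb{Q}(\sqrt{D})$ is monogenic, the different $\mathfrak{D}_F$ is principal, generated by $t = \sqrt{D}$ if $D \equiv 1 \pmod 4$ and $t = 2\sqrt{D}$ otherwise; in both cases $\mathrm{Nm}_{F/\mathbb{Q}}(t) = -d_F$, so $1/t \in \mathbb{Z}_F^{\vee}$ has the right norm but has opposite signs under the two real embeddings. The hypothesis $d_F \in \mathcal{N}^{-}$ is precisely what I need to fix this: it supplies a fundamental unit $\varepsilon \in \mathbb{Z}_F^{\times}$ with $\mathrm{Nm}_{F/\mathbb{Q}}(\varepsilon) = -1$, whose two real embeddings necessarily have opposite signs and can therefore be arranged, by replacing $\varepsilon$ with $\pm\varepsilon$ if needed, to match those of $t$. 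Setting $\alpha := \varepsilon/t$ then yields a totally positive element with $\mathrm{Nm}_{F/\mathbb{Q}}(\alpha) = (-1)/(-d_F) = 1/d_F$, and $\alpha \in (1/t)\mathbb{Z}_F = \mathbb{Z}_F^{\vee}$ because $\varepsilon \in \mathbb{Z}_F$.

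Finally, I would assemble the conclusion: integrality (and hence evenness) of $\Lambda_{(\mathcal{O},\alpha)}$ follows from $\alpha \in \mathbb{Z}_F^{\vee}$, which ensures that $\mathrm{Tr}_{F/\mathbb{Q}}\!\bigl(\alpha\,\mathrm{Nm}_B(x)\bigr) \in \mathbb{Z}$ for every $x \in \mathcal{O}$ and therefore that the inner product (\ref{inner}) takes values in $2\mathbb{Z}$ on the diagonal; unimodularity follows from the determinant computation above. Any positive definite $8$-dimensional even unimodular lattice is isometric to $E_8$ (cf. Table \ref{ext}), so $\Lambda_{(\mathcal{O},\alpha)} \sim E_8$. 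The main point I expect to require care is the sign-matching step: this is conceptually where the hypothesis $\mathcal{N}^{-}$ does its work, and it also explains why $\mathcal{D}_2$ (no prime $p \equiv 3 \pmod 4$ dividing $d_F$) is the right ambient set for the subsequent density statement, since a unit of norm $-1$ cannot exist when any such $p$ ramifies.
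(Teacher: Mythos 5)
Your proposal is correct and follows essentially the same route as the paper's proof: reduce to $D_B=\mathbb{Z}_F$ via Proposition \ref{discB}, write $\mathfrak{D}_F=(t)$ with $t=\sqrt{D}$ or $2\sqrt{D}$, and use the norm $-1$ unit supplied by $\mathcal{N}^{-}$ to make the twisting element totally positive of norm $1/d_F$ inside $\mathbb{Z}_F^{\vee}$. The only cosmetic difference is that you take $\alpha=\varepsilon/t$ where the paper takes $\alpha=1/(\varepsilon t)$; both are totally positive elements of $\mathbb{Z}_F^{\vee}$ of the required norm, so the argument is unaffected.
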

  
  \begin{proof}
      Using Proposition \ref{discB} with $n=2$, we get that 
			$$D_B=\left\{\begin{array}{lll}
			   2\mathbb{Z}_F & \mathrm{if} & D\equiv 1\quad (\mathrm{mod}\ 8), \\
			    \mathbb{Z}_F & \mathrm{if} & D\not\equiv 1\quad (\mathrm{mod}\ 8).
			   \end{array}\right.$$
   Clearly, if $d_F\in \mathcal{D}_{\textrm{even}}\cap \mathcal{N}^{-} $ or $d_F\in \mathcal{D}_{\textrm{odd}}\cap \mathcal{N}^{-} $ with $d_F\not\equiv 1\pmod8$, then 
   $$\mathrm{Nm}_{F/\mathbb{Q}}(D_\mathcal{O})=1.$$ 
  Consequently, 
    $$  \det(\Lambda_{(\mathcal{O},\alpha)})=\mathrm{Nm}_{F/\mathbb{Q}}(\alpha)^2 d_F^2$$
    for any maximal order $\mathcal{O}\in B$.
    
     On the other hand, we have $\mathfrak{D}_F=(t)$, with $t=2\sqrt{D}$ if $D\not\equiv 1\pmod4 $, or $t=\sqrt{D}$ if $D\equiv 1\pmod4 $. 
       By assumption, $F$ contains a unit $\varepsilon$ of norm $-1$.
    If $\varepsilon<0$, then its conjugate is positive, replacing $\varepsilon$ by $-\varepsilon$ if necessary, we get $\varepsilon t$ is totally positive.
    Thus, for $\alpha=\frac{1}{\varepsilon t}$, we have $\alpha\in\Z_F^{\vee}$ totally positive of norm $\frac{1}{d_F}$.
    Finally, for $\alpha=\frac{1}{\varepsilon t}$, we have that $\Lambda_{(\mathcal{O},\alpha)}$ is an even unimodular lattice of dimension $8$.
  \end{proof}
  
     Denoting by
     $$\mathcal{T}_{E_8}(X)=\#\left\{F \mathrm{\ quadratic \ totally\ real }\ \Big|\ d_F\leq X\,,   \Lambda_{(\mathcal{O},\alpha)}\sim E_8, \textrm{ where } \mathcal{O} \subseteq B=\left(\frac{-1,-1}{F}\right) \textrm{ is a maximal order} \right\} $$
  we obtain the following corollary. 
  
    \begin{corollary}\label{E_8}
    For $X\rightarrow\infty$, we have the inequality
    $$(\tau -o(1))c_1\frac{X}{\sqrt{\log(X)}}\leq \mathcal{T}_{E_8}(X),$$
    where  $c_1=\frac{9}{8\pi}\prod_{p\equiv 1\pmod 4}(1-p^{-2})^{1/2}$ and  $\tau=\frac{1}{9}\prod_{j\textrm{ odd}}(1-2^{-j})=0,046602493\dots$
    \end{corollary}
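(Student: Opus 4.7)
The plan is to combine Lemma \ref{fundalemma8} with two analytic inputs: a Landau-type count of fundamental discriminants with prescribed local behavior, and the Fouvry--Klüners lower bound on the density of real quadratic fields whose fundamental unit has norm $-1$.

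First I would use Lemma \ref{fundalemma8} to reduce the problem to a purely arithmetic counting question. Setting $\mathcal{D}_2' := \mathcal{D}_{\mathrm{even}} \cup \{d \in \mathcal{D}_{\mathrm{odd}} : d \not\equiv 1 \pmod 8\}$, every $d_F \in \mathcal{D}_2' \cap \mathcal{N}^-$ yields a real quadratic field $F$ contributing to $\mathcal{T}_{E_8}(X)$, so
$$\mathcal{T}_{E_8}(X) \geq \#\{d_F \leq X : d_F \in \mathcal{D}_2' \cap \mathcal{N}^-\}.$$

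Next I would apply the Selberg--Delange method to obtain the asymptotic
$$\#\{d_F \leq X : d_F \in \mathcal{D}_2'\} \sim c_1 \, \frac{X}{\sqrt{\log X}}.$$
The associated Dirichlet series factors as an Euler product over the primes $p = 2$ and $p \equiv 1 \pmod 4$, with the $\pmod 8$ restriction detected by a character sum. Near $s = 1$ this series has the form $(s-1)^{-1/2} G(s)$ for a function $G$ holomorphic and nonzero at $s = 1$, and a standard Tauberian argument yields the above asymptotic. The factor $\prod_{p\equiv 1\pmod 4}(1-p^{-2})^{1/2}$ arises from the Euler product, while the numerical constant $\frac{9}{8\pi}$ absorbs the Gamma factor $1/\Gamma(1/2)=1/\sqrt{\pi}$ together with the local contribution at $p=2$ and the share of $\mathcal{D}_{\mathrm{odd}}$ surviving the $\pmod 8$ restriction.

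The last ingredient is the theorem of Fouvry--Klüners on the solubility of the negative Pell equation: within $\mathcal{D}_2$ (and hence within $\mathcal{D}_2'$), the lower density of $\mathcal{N}^-$ is at least $\tau = \tfrac{1}{9}\prod_{j \text{ odd}}(1-2^{-j})$, i.e.,
$$\liminf_{X\to\infty}\; \frac{\#\{d_F \leq X : d_F \in \mathcal{D}_2' \cap \mathcal{N}^-\}}{\#\{d_F \leq X : d_F \in \mathcal{D}_2'\}} \;\geq\; \tau.$$
Combining this density estimate with the preceding Selberg--Delange asymptotic delivers the claimed inequality $\mathcal{T}_{E_8}(X) \geq (\tau - o(1))\, c_1\, X/\sqrt{\log X}$.

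The main obstacle is the density bound for $\mathcal{N}^-$, which is the deep input borrowed from Fouvry--Klüners; the Selberg--Delange count of $\mathcal{D}_2'$ is classical analytic number theory, and the algebraic reduction furnished by Lemma \ref{fundalemma8} is already in hand. A minor technical point worth checking is that the $\pmod 8$ restriction used to define $\mathcal{D}_2'$ is compatible with the Fouvry--Klüners stratification, so that the density $\tau$ indeed transfers from $\mathcal{D}_2$ to $\mathcal{D}_2'$; this can be verified by a standard splitting of the family according to the $2$-adic genus theory used in their argument.
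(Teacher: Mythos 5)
Your overall strategy coincides with the paper's: reduce to a discriminant count via Lemma \ref{fundalemma8}, count special discriminants, and feed in the Fouvry--Kl\"uners lower bound for the negative Pell equation. However, two of your intermediate claims are wrong as stated, and they do not combine into a valid proof. First, the asymptotic $\#\{d_F\leq X : d_F\in\mathcal{D}_2'\}\sim c_1 X/\sqrt{\log X}$ cannot hold: $c_1$ is the Selberg--Delange constant for the \emph{full} set $\mathcal{D}_2$, and your $\mathcal{D}_2'$ omits the subfamily $\{d\in\mathcal{D}_{\textrm{odd}} : d\equiv 1\pmod 8\}$, which has positive relative density (asymptotically $4/9$ of $\mathcal{D}_2$, by a character-twist argument on the primes $\equiv 1\pmod 4$). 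So $\mathcal{D}_2'(X)\sim \frac{5}{9}c_1 X/\sqrt{\log X}$, not $c_1 X/\sqrt{\log X}$. Second, the density $\tau=\frac{1}{9}\prod_{j\textrm{ odd}}(1-2^{-j})$ is \emph{not} the Fouvry--Kl\"uners lower density of $\mathcal{N}^-$ inside $\mathcal{D}_2$ or $\mathcal{D}_2'$; their constant is $\beta=\prod_{j\textrm{ odd}}(1-2^{-j})$, and the extra factor $\frac{1}{9}$ in the paper's $\tau$ is the relative density of $\mathcal{D}_{\textrm{even}}$ inside $\mathcal{D}_2$, not part of any density statement about norm $-1$ units. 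If you multiply your two claims you happen to recover $\tau c_1$, but only because an overestimate in one step is being cancelled by an understatement in the other; using the corrected count $\frac{5}{9}c_1$ together with your claimed density $\tau$ would yield only $\frac{5}{9}\tau c_1 X/\sqrt{\log X}$, which is weaker than the corollary.

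The paper avoids this bookkeeping problem by restricting to $\mathcal{D}_{\textrm{even}}$ alone: it uses $\mathcal{D}_{\textrm{even}}(X)\sim\frac{1}{9}c_1 X/\sqrt{\log X}$ and the Fouvry--Kl\"uners bound $(\mathcal{D}_{\textrm{even}}\cap\mathcal{N}^{-})(X)\geq(\beta-o(1))\,\mathcal{D}_{\textrm{even}}(X)$, whence $\mathcal{T}_{E_8}(X)\geq(\beta-o(1))\frac{1}{9}c_1 X/\sqrt{\log X}=(\tau-o(1))c_1 X/\sqrt{\log X}$. Your idea of also keeping the odd discriminants $d\not\equiv 1\pmod 8$ is legitimate and could in principle improve the constant, but then you must quote the Fouvry--Kl\"uners density separately on that odd subfamily (their stratification treats odd and even special discriminants differently); simply asserting that a density on $\mathcal{D}_2$ ``transfers'' to the subset $\mathcal{D}_2'$ is not valid, since lower density bounds do not pass to positive-proportion subsets.
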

    \begin{proof}
     Let $X>1$. We denote by $\mathcal{D}(X)$ the cardinality of $\mathcal{D}\cap [1,X]$.
     It is known  \cite{weil} that 
     $$\mathcal{D}_2(X)\sim c_1\frac{X}{\sqrt{\log(X)}},$$
     $$\mathcal{D}_{\textrm{odd}}(X)\sim \frac{8}{9}c_1\frac{X}{\sqrt{\log(X)}},$$
     and 
     \begin{equation}\label{eqeven}
         \mathcal{D}_{\textrm{even}}(X)\sim\frac{1}{9} c_1\frac{X}{\sqrt{\log(X)}}.
     \end{equation}
    By a straightforward application of Lemma \ref{fundalemma8}, we obtain
    $$(\mathcal{D}_{\textrm{even}}\cap\mathcal{N}^{-1})(X)\leq \mathcal{T}_{E_8}(X).$$
    It is proved in \cite{fouvry2010negative} that for $X\rightarrow\infty$, we have
    $$(\beta -o(1))(\mathcal{D}_{\textrm{even}}(X)\leq (\mathcal{D}_{\textrm{even}}\cap\mathcal{N}^{-1})(X)\leq \left(\frac{3}{2} +o(1)\right)\mathcal{D}_{\textrm{even}}(X)$$  
    where 
    $$\beta=\prod_{j\textrm{ odd}}(1-2^{-j}).$$
    The result now follows from \eqref{eqeven}.
    \end{proof}    

    As a consequence of the previous results, we obtain an infinite family of quaternion algebras from which one can construct the $E_8$ lattice.
    \begin{remark}
       Corollary \ref{E_8} gives a lower bound on the density of the set of quaternion algebras $B$ where the $E_8$ lattice is realisable as an embedding of a maximal order in $B$. Here we ordered the algebras with respect to their underlying number fields.
       As mentioned previously, we order number fields with respect to the discriminant.  If we restrict our (discriminants) counting function to $\mathcal{D}_2$ (instead of all integers), then the result means that at least $49\%$ of the discriminants in $\mathcal{D}_2$ are discriminants of number fields $F$ with $B=\left(\frac{-1,-1}{F}\right)$ in $\mathcal{T}_{E_8}$. It is conjectured that this percentage is precisely $58\%$ (see for instance \cite{fouvry2010negative}).
    \end{remark}
    
	\begin{example}
	Let $F=\mathbb{Q}(\sqrt{5})$. We have $d_F=5$, clearly $d_F\in \mathcal{D}_{\textrm{odd}}\cap \mathcal{N}^{-} $ with $d_F\not\equiv 1\pmod8$.
	In fact,  $\varepsilon=2+\sqrt{5}$ has norm $-1$.
	Moreover, $\Z_F$ is generated by $\frac{1+\sqrt{5}}{2}$, and $\mathfrak{D}_F=(\sqrt{5})$. Furthermore, $\alpha=\frac{1}{\varepsilon\sqrt{5}}$ is totally positive of norm $\frac{1}{d_F}$.
	By Proposition \ref{d1},
	$$\mathcal{O}=\mathbb{Z}_F\left[1,i,\frac{\sqrt{5}+1}{4}+\frac{\sqrt{5}+3}{4}i+\frac{j}{2},\frac{\sqrt{5}+3}{4}+\frac{\sqrt{5}+1}{4}i+\frac{k}{2}\right]$$
	is a maximal order in $B=\left(\frac{-1,-1}{F}\right)$.
    Consequently, the lattice $\Lambda_{(\mathcal{O},\alpha)}$ is even unimodular. Thus, it is similar to $E_8$. Its associated Gram matrix is
    $$\left(\begin{smallmatrix}
    4& -2 & 0 & 0& -1 & 1&  1&  0\\
    -2&  2&  0&  0&  1&  0&  0&  1\\
     0&  0&  4& -2&  1&  0& -1&  1\\
     0&  0& -2&  2&  0&  1 & 1&  0\\
    -1&  1&  1&  0&  2&  0  &0&  1\\
     1&  0&  0&  1&  0&  2  &1&  1\\
     1&  0& -1&  1&  0&  1  &2&  0\\
     0&  1&  1&  0&  1&  1  &0&  2\\
    \end{smallmatrix}\right).$$
	\end{example}

\subsection{Dimension 16}\label{subsec:dim16}
    
    The argument that we used to prove Theorem \ref{E_8} is based on the monogenity of $F$. Unfortunately, not all number fields have this property.
    Indeed, it was proved in \cite{gras1985condition} that with the exception of the maximal real subfields of cyclotomic fields, abelian extensions of $\mathbb{Q}$ of prime degree greater than or equal to $5$ are not monogenic. Moreover, Gras \cite{gras1986non} showed that almost all abelian extensions of $\mathbb{Q}$ with degree coprime to $6$ are not monogenic.
    In this section we will show a result analogous to Theorem \ref{E_8} in dimension $16$ by investigating an infinite family of non-monogenic real quartic number fields. 
    
Note that in the previous section we used the fact that 
$$\mathfrak{D}_F=(f'(\beta)).$$
 In general, we have that $$(f'(\beta))\subset \mathfrak{D}_F, $$
where $F=\Q(\beta)$ and $\beta\in\Z_F$.
Hence,
$$\Z_F\subset\Z_F^{\vee}\subset \frac{1}{f'(\beta)}\Z_F.$$
Thus, assuming that $[\Z_F:\Z[\beta]]$ is a square, by Proposition \ref{fundeq}, the norm of
$$\alpha=\frac{[\Z_F:\Z[\beta]]^{1/2}}{\varepsilon f'(\beta)}$$
is
$$|\mathrm{Nm}_{F/\mathbb{Q}}(\alpha)|=\frac{[\Z_F:\Z[\beta]]^{n/2-2}}{d_F}$$
for any unit $\varepsilon\in\Z_F$.
Consequently, if $n=4$, then $\alpha\in \Z_F^{\vee}$.
Hence, if $\varepsilon$ is chosen such that $\alpha$ is totally positive, then the lattice  $\Lambda_{(\mathcal{O},\alpha)}$ is an even lattice of volume 
 $$  \det(\Lambda_{(\mathcal{O},\alpha)})=\mathrm{Nm}_{F/\mathbb{Q}}(D_\mathcal{O}).$$

    Table \ref{ext} shows that there are exactly two extremal even unimodular lattices (up to isometry) in dimension $16$. In fact, it is well known that these lattices are $E_8^2$ and $D_{16}$. We will use the above strategy to construct the extremal lattice $E_8^2$ and the densest known lattice packing $\Lambda_{16}$ from infinitely many quaternion algebras. For this purpose we will use the so called \textit{simplest quartic fields}.
    For any integer $m\in\mathbb{N}\setminus\{3\}$ such that the odd part of $m^2 +16$ is square-free, we consider the polynomial
    $$f_m(x)=x^4-mx^3-6x^2+mx+1$$
    of discriminant $d_{f_m}=4\Delta_m^3$, where $\Delta_m=m^2+16$. The polynomial $f_m$ defines different real cyclic quartic number
    fields $F_m$ for different values of $m$.
    The fields $F_m$ are called the simplest quartic fields \cite{stephane}. Let $r_m$ be the largest root of $f_m$. Then we can write $F_m=\mathbb{Q}(r_m)$. Let $\sigma$ be a generator of $\Gal(F_m/\mathbb{Q})$. One can show that \cite{stephane} $$\sigma(r_m)=\frac{r_m-1}{r_m+1}\,,\ \sigma(r_m)^2=-\frac{1}{r_m}\,,\  \textrm{ and }\ 
\sigma(r_m)^3=-\frac{r_m+1}{r_m-1}.$$
   The monogenity problem for the fields $F_m$ was completely solved in \cite{gaal2014calculating}. In fact, the authors showed that, with the exceptions of $m=2, 4$, the fields $F_m$ are not monogenic. Moreover, the authors showed that $[\Z_{F_m}:\Z[r_m]]=4, 8$ for $v_2(m)=1,2$ respectively, where $v_2(m)$ is the $2$-adic valuation of $m$. Furthermore, they present an integral basis for $F_m$ for all $m$. More precisely, assuming that $m^2+16$ is not divisible by an odd square, an integral basis of $F_m$ is given by
    \begin{align*}
        \left(1,r_m,r_m^2,\frac{1+r_m^3}{2}\right) ~~&\mathrm{ if } ~v_2(m)=0,\\
        \left(1,r_m, \frac{1+r_m^2}{2},\frac{r_m+r_m^3}{2}\right) ~~&\mathrm{ if }~ v_2(m)=1,\\
        \left(1,r_m, \frac{1+r_m^2}{2},\frac{1+r_m+r_m^2+ r_m^3}{4}\right) ~~&\mathrm{ if }~ v_2(m)=2,\\
        \left(1,r_m,\frac{1+2r_m-r_m^2}{4},\frac{1+r_m+r_m^2+ r_m^3}{4}\right) ~~&\mathrm{ if }~ v_2(m)\geq 3.\\
    \end{align*}
It is proved in \cite{gaal2014calculating} that if $v_2(m)=1$, then $[\Z_{F_m}:\Z[r_m]]=4$.
We consider $s=\frac{r_m^3 - mr_m^2 - 5r_m + m}{2}$. Using the above we can see that the element $\alpha = \frac{2}{sf_m(r_m)'}$ is totally positive of norm $\frac{1}{d_{F_m}}$. 
Consequently, the lattice $\Lambda_{(\mathcal{O},\alpha)}$ is even unimodular of dimension $16$ for any maximal order in a quaternion algebra over $F_m$. 
We have the following  result. 


\begin{theorem}\label{E16} 
\cami{Let $\alpha$....} 
$$\mathcal{T}_{E_8^2}=\left\{F \mathrm{\ quartic\ totally\ real  }\ \Big| \    \Lambda_{(\mathcal{O},\alpha)}\sim E_8^2, \textrm{ where } \mathcal{O} \subseteq B=\left(\frac{-1,-1}{F}\right) \textrm{ is a maximal order} \right\} $$
   is infinite.
\end{theorem}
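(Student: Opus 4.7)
My plan proceeds in three phases.

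Phase 1: construct an infinite admissible subfamily. I restrict to $m\equiv 2\pmod 4$, so that $v_2(m)=1$ and the discussion preceding the theorem applies: $\alpha=\tfrac{2}{s\,f_m'(r_m)}$ is totally positive, lies in $\mathbb{Z}_{F_m}^\vee$, and has $|\mathrm{Nm}_{F_m/\mathbb{Q}}(\alpha)|=1/d_{F_m}$. Writing $m=2k$ with $k$ odd, the squarefree-odd-part condition on $m^2+16$ reduces to $k^2+4$ being squarefree (since $k^2+4$ is then odd), which holds on a positive proportion of odd $k$ by a standard Estermann-type sieve (the density is $\prod_{p\equiv 1\pmod 4}(1-2/p^2)>0$). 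Lemma \ref{order_quartic} then supplies an explicit maximal order $\mathcal O$ in $B=\left(\frac{-1,-1}{F_m}\right)$, and Proposition \ref{discB}(b) gives $D_B=\mathbb{Z}_{F_m}$ since $[F_m:\mathbb{Q}]=4$.

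Phase 2: verify even unimodularity. Substituting $|\mathrm{Nm}(\alpha)|=1/d_{F_m}$ and $\mathrm{Nm}(D_\mathcal O)=1$ into Proposition \ref{volume} yields $\det(\Lambda_{(\mathcal O,\alpha)})=1$; the conditions $\alpha\in\mathbb{Z}_{F_m}^\vee$ and $\alpha$ totally positive guarantee integrality and evenness via the trace-form inner product \eqref{inner}. By Table \ref{ext}, each $\Lambda_{(\mathcal O,\alpha)}$ is then similar to either $E_8^2$ or $\Lambda_{16}$.

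Phase 3, the main obstacle, is to rule out $\Lambda_{16}$. Because the two candidates share theta series, center density and kissing number (the weight-$8$ level-one modular form space is one dimensional), I must probe the internal geometry of the $480$ minimum vectors: the root sublattice they generate is the irreducible $D_{16}$ for $\Lambda_{16}$ but splits as $E_8\perp E_8$ for $E_8^2$. My approach is to exhibit, for each admissible $m$, an $E_8$-sublattice inside $\Lambda_{(\mathcal O,\alpha)}$; by the classical fact that a self-dual sublattice of a unimodular lattice is an orthogonal direct summand, combined with the uniqueness of $E_8$ as the rank-$8$ even unimodular lattice, this forces $\Lambda_{(\mathcal O,\alpha)}\simeq E_8\oplus E_8$. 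A natural candidate sublattice arises from the unique real quadratic subfield $K_m\subset F_m$: the embedding $\left(\frac{-1,-1}{K_m}\right)\hookrightarrow B$ together with the maximal-order basis of Lemma \ref{order_quartic} cuts out a rank-$8$ sublattice which, after verifying the even-unimodular property by Proposition \ref{volume}, must be isometric to $E_8$.

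The delicate point is that the given twist $\alpha$ lies in $F_m$ and not in $K_m$, so the naive restriction to the $B_{K_m}$-part does not immediately recover the construction of Theorem \ref{thm-E8}; one must either replace $\alpha$ by a $K_m$-valued avatar (and re-verify integrality, positivity, and the determinant formula) or work directly with the explicit Gram matrix coming from the basis in Lemma \ref{order_quartic} to confirm that the candidate rank-$8$ sublattice is indeed even unimodular. Once this compatibility is arranged uniformly across the infinite family produced in Phase 1, the decomposition $\Lambda_{(\mathcal O,\alpha)}\simeq E_8^2$ follows, whence $\mathcal T_{E_8^2}$ is infinite.
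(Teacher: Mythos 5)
Your Phases 1 and 2 coincide with what the paper does: restrict to $m=2k$ with $k$ odd so that $v_2(m)=1$, invoke Estermann's positive-density result for square-free values of $k^2+4$, take the maximal order of Lemma \ref{order_quartic} (with $D_B=\mathbb{Z}_{F_m}$ by Proposition \ref{discB}), and conclude from Proposition \ref{volume} that $\Lambda_{(\mathcal{O},\alpha)}$ is even unimodular of rank $16$. The divergence, and the gap, is in Phase 3. First, a terminological error that matters here: the two extremal even unimodular lattices in dimension $16$ are $E_8^2$ and $D_{16}^+$, not $E_8^2$ and $\Lambda_{16}$; the Barnes--Wall lattice $\Lambda_{16}$ has minimum $4$ and determinant $2^8$, is not unimodular, and is the object of the \emph{next} theorem in the paper (the $v_2(m)=2$ case). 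Your geometric criterion (root system $D_{16}$ versus $E_8\perp E_8$) is the right invariant for distinguishing $D_{16}^+$ from $E_8^2$, but you have attached it to the wrong lattice name, and this conflation risks confusing the two distinct constructions the paper carries out.

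Second, and more seriously, Phase 3 is a program rather than a proof. Your strategy --- exhibit an even unimodular rank-$8$ sublattice, invoke the fact that a unimodular sublattice of a unimodular lattice is an orthogonal summand, and use uniqueness of $E_8$ --- is sound in outline, but the existence of the candidate sublattice is exactly the point you leave open: you would need the maximal order of $\bigl(\tfrac{-1,-1}{K_m}\bigr)$ to sit inside $\mathcal{O}$, and the restricted form $\mathrm{Tr}_{F_m/\mathbb{Q}}(\alpha\,\mathrm{Tr}_B(x\bar y))=\mathrm{Tr}_{K_m/\mathbb{Q}}\bigl(\mathrm{Tr}_{F_m/K_m}(\alpha)\,\mathrm{Tr}_{B_{K_m}}(x\bar y)\bigr)$ to have determinant $1$ on it, i.e.\ $\mathrm{Tr}_{F_m/K_m}(\alpha)$ must be a totally positive generator of the inverse different of $K_m$ times a norm --- none of which you verify, and none of which is obviously uniform in $m$. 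You are right that theta series, density and kissing number cannot separate the two candidates (the paper's own phrase ``compare the densities'' is too weak for this step, and the identification is really done by explicit computation with the Gram matrix from the basis $\mathcal{B}$). So you have correctly located the crux of the theorem, but your proposed resolution of it is not yet a proof.
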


\begin{proof}
    Let $F_m$ denote a simplest quartic field as described at the beginning of this section.
    We will show that, if $m=2k$ for some odd $k$, \emph{i.e.}, $v_2(m)=1$, then $\Lambda_{(\mathcal{O},\alpha)}$ is even unimodular for $\alpha = \frac{2}{sf_m(r_m)'}$.
    Moreover, we have $\Delta_m=4(k^2+4)$. It is proved in \cite{ester} that the set of square-free integers of the form $k^2+4$, with $k$ odd, has a positive density.
    We consider the quaternion algebra $B=\left(\frac{-1,-1}{F_m}\right)$.
    The basis
    $$\mathcal{B}=\left\{1, \frac{(1+r_m)(1+i)}{2}, \frac{(1+r_m)(1+j)}{2}, \frac{1+i+j+k}{2}\right\}$$ 
    is a basis of a maximal order in $B$ (indeed, the discriminant of the basis $\mathcal{B}$ coincides with $D_B$).
    One can construct the lattice associated to this order explicitly, using the integral basis for $F_m$ in the case $v_2(m)=1$, and compare the densities of $E_8^2$ and $\Lambda_{(\mathcal{O},\alpha)}$.
  
\end{proof}

If $v_2(m)=2$, then by equation \ref{fundeq}, we have

\begin{equation}
    \mathrm{Nm}_{F_m/\mathbb{Q}}(f_m(r_m)')=2^6 d_{F_m}.
\end{equation}
Hence, 
$$  \mathrm{Nm}_{F_m/\mathbb{Q}}\left(\frac{f_m(r_m)'}{4}\right)= \frac{d_{F_m}}{4}.$$
We consider $s=\frac{r_m^3 - mr_m^2 - 5r_m + m}{4}$. Using the above we can see that $\mathrm{Nm}_{F_m/\mathbb{Q}}(s)=1$.
Moreover, the element $\alpha = \frac{4}{sf_m(r_m)'}$ is totally positive of norm $\frac{2^4}{ d_{F_m}}$. 
Consequently, taking  $\alpha=\frac{2}{s f(r_m)'}$, the lattice $\Lambda_{(\mathcal{O},\alpha)}$ is an even lattice of volume $2^8$ with minimal norm $4$. A lattice with such properties is necessarily isometric to the Barnes–Wall lattice $\Lambda_{16}$, which is the best known lattice sphere packing in its respective dimension.

We can check that the basis $\mathcal{B}$ in the proof of Theorem \ref{E16} generates a maximal order in $\left(\frac{-1,-1}{F_m}\right)$. We arrive at the following theorem, which provides an analogous result for the best known sphere packing in dimension $16$.

\begin{theorem} The set 
$$\mathcal{T}_{\Lambda_{16}}=\left\{ F \mathrm{\ quartic~totally\ real } \ \Big| \   \Lambda_{(\mathcal{O},\alpha)}\sim \Lambda_{16}, \textrm{ where } \mathcal{O} \subseteq B=\left(\frac{-1,-1}{F}\right) \textrm{ is a maximal order} \right\} $$
   is infinite.
\end{theorem}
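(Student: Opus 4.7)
The plan is to follow the strategy sketched in the paragraph immediately preceding the statement. That discussion already establishes that, whenever $F_m$ is a simplest quartic field with $v_2(m)=2$ and the odd part of $m^2+16$ is square-free, the construction produces a lattice $\Lambda_{(\mathcal{O},\alpha)}$ that is even, has determinant $2^8$, and has minimum $4$; such a lattice is then forced to be isometric to the Barnes--Wall lattice $\Lambda_{16}$, which is the unique even rank-$16$ lattice with these invariants. Hence the only task left for the theorem is to show that admissible $m$ produce infinitely many distinct fields $F_m$.

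First I would write out the admissibility conditions explicitly: $m=4k$ with $k$ odd (so that $v_2(m)=2$ and the integral basis for the case $v_2(m)=2$ listed just before the theorem is in force), and $m^2+16=16(k^2+1)$ has square-free odd part. Substituting $k=2j+1$, one gets $k^2+1=2(2j^2+2j+1)$, so the condition reduces to square-freeness of the polynomial value $g(j)=2j^2+2j+1$. This is exactly the type of quadratic square-free sieve invoked from \cite{ester} in the proof of Theorem \ref{E16}: since $g$ is irreducible over $\mathbb{Q}$ and has no fixed square divisor (it is always odd, and reduction modulo $p^2$ for odd $p$ shows that $p^2\mid g(j)$ fails for a positive density of $j$), a standard sieve gives that $\{j\in\mathbb{N}:g(j)\text{ square-free}\}$ has positive density, hence is infinite.

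Second, I would argue that distinct admissible $m$ produce distinct fields $F_m$. The discriminant satisfies $d_{F_m}\mid 4\Delta_m^3$ with $\Delta_m=m^2+16$ strictly increasing in $|m|$, and the correspondence $m\mapsto F_m$ on the family of simplest quartic fields has only finitely many coincidences per fixed field (this is classical: the parameter $m$ can be recovered up to the action of a finite group on $\mathrm{Gal}(F_m/\mathbb{Q})$-generators). Removing those coincidences from the positive-density set of admissible $m$ still leaves infinitely many pairwise distinct $F_m$. Together with Lemma \ref{order_quartic}, which provides an explicit maximal order $\mathcal{O}\subset B=\left(\frac{-1,-1}{F_m}\right)$ with $D_\mathcal{O}=\Z_{F_m}$, and with the pre-statement formula $\alpha=\frac{2}{s\, f_m'(r_m)}$ where $s=\frac{r_m^3-mr_m^2-5r_m+m}{4}$, this gives the theorem.

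The main obstacle, just as in Theorem \ref{E16}, is not the counting step but the implicit claim that the resulting lattice has minimum \emph{exactly} $4$, which is what distinguishes $\Lambda_{16}$ from any other $16$-dimensional even lattice of determinant $2^8$. The pre-statement discussion handles this by an explicit trace-norm computation using the integral basis of $F_m$ in the $v_2(m)=2$ case: one must verify that $\|x\|^2=2\,\mathrm{Tr}_{F_m/\Q}(\alpha\,\mathrm{Nm}_B(x))\geq 4$ for all nonzero $x\in\mathcal{O}$, i.e., $\mathrm{Tr}_{F_m/\Q}(\alpha\beta)\geq 2$ for every nonzero totally positive $\beta\in\mathrm{Nm}_B(\mathcal{O})\subset\Z_{F_m}$, with equality realized by some $x$. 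Making this bound uniform in $m$ is the fiddly part; if a uniform arithmetic argument using AM--GM applied to $\mathrm{Tr}_{F_m/\Q}(\alpha\beta)\geq 4\,\mathrm{Nm}_{F_m/\Q}(\alpha\beta)^{1/4}$ together with the explicit value $\mathrm{Nm}_{F_m/\Q}(\alpha)=2^4/d_{F_m}$ and $d_{F_m}\to\infty$ does not immediately close the gap, then the safest route is to verify the minimum for one small admissible $m$ using the Magma code in the appendix and propagate the conclusion along the family using the common cyclic Galois structure of the simplest quartic fields and the Galois-equivariance of the construction.
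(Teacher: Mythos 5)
Your proposal follows essentially the same route as the paper: the paper's proof of this theorem is a one-paragraph reduction to the counting statement, noting that for $m=4k$ with $k$ odd the admissibility condition becomes square-freeness of the odd part of $\Delta_m=16(k^2+1)$ and citing \cite{ester} for the positive density of square-free values of $k^2+1$, while the identification of $\Lambda_{(\mathcal{O},\alpha)}$ with $\Lambda_{16}$ is delegated entirely to the pre-theorem discussion (determinant $2^8$, even, minimum $4$), exactly as you do. Your two additions---checking that distinct admissible $m$ give distinct fields $F_m$, and flagging that the minimum-equals-$4$ claim is the real content---go beyond what the paper records (it simply asserts the minimum), so they are welcome but not part of the paper's argument; be aware, though, that your AM--GM fallback cannot close that gap, since with $\mathrm{Nm}_{F_m/\Q}(\alpha\beta)\geq 2^4/d_{F_m}$ it only yields $\mathrm{Tr}_{F_m/\Q}(\alpha\beta)\geq 8\,d_{F_m}^{-1/4}\to 0$.
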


\begin{proof}
    The proof is analogous to the proof of Theorem \ref{E16}. 
    Note that if $m=4k$ with $k$ odd, then we need the odd part in $\Delta_m=16(k^2+1)$ to be square-free. Hence, the conclusion is implied by the fact that the set of integers $k$ such that $k^2+1$ is square-free is of a positive density \cite{ester}. 
\end{proof}

\begin{example}
    We consider $m=20$. The quartic number field $F_{20}$ is given by the polynomial $$f_{20}(x)=x^4-10x^3-6x^2+20x+1.$$
    We have $F_{20}=\mathbb{Q}(r_m)$ is of discriminant $d_{F_{20}}=4(m^2+16)^3$, where
    $$r_m=\frac{1}{2}\left( \frac{m+\sqrt{\Delta_m}}{2}+\frac{\sqrt{\Delta_m+m\sqrt{\Delta_m}}}{2}\right).$$
    Taking $s=\frac{r_m^3 - mr_m^2 - 5r_m + m}{2}$, the element $\alpha = \frac{1}{sf_m(r_m)'}$ yields an even lattice $\Lambda_{(\mathcal{O},\alpha)}$ similar to the lattice $\Lambda_{16}$, with Gram matrix
    
    $$\begin{pmatrix}
    4& 0& 0& 2& 2& 2& 0& 4& 2& 2& 0& 4& 2& 0& 0 & 1 \\
    0& 4& 0& 6& 2& 2& 4& 46& 2& 2& 4& 46& 0& 2& 0& 3\\
    0& 0& 4& 42 & 0& 4& 42& 448& 0& 4& 42& 448& 0& 0& 2& 21\\
    2& 6& 42& 452& 4& 46& 448& 4786& 4& 46& 448& 4786& 1& 3& 21& 226\\
    2& 2& 0& 4& 4& 4& 4& 50& 2& 2& 2& 25& 2& 2& 0& 4\\
    2& 2& 4& 46& 4& 12& 88& 946& 2& 6& 44& 473& 2& 2& 4& 46\\
    0& 4& 42& 448& 4& 88& 892& 9522& 2& 44& 446& 4761& 0& 4& 42& 448\\
    4& 46& 448& 4786& 50& 946& 9522& 101660& 25& 473& 4761& 50830& 4& 46& 448& 4786\\
    2& 2& 0& 4& 2& 2& 2& 25 &4& 4& 4& 50& 2& 2& 0& 4\\
    2& 2& 4& 46& 2& 6& 44& 473& 4& 12& 88& 946& 2& 2& 4& 46\\
    0& 4& 42& 448& 2& 44& 446& 4761& 4 &88 &892& 9522& 0& 4& 42& 448\\
    4& 46& 448& 4786& 25& 473& 4761& 50830& 50& 946& 9522& 101660 & 4 &46& 448& 4786\\
    2& 0& 0& 1& 2& 2& 0& 4& 2& 2& 0& 4& 4& 0& 0& 2\\
    0& 2& 0& 3& 2& 2& 4& 46& 2& 2& 4& 46& 0& 4& 0& 6\\
    0& 0& 2& 21& 0 &4& 42& 448& 0& 4& 42& 448& 0& 0& 4& 42\\
    1& 3& 21& 226& 4& 46& 448& 4786& 4& 46& 448& 4786& 2 &6& 42& 452
    \end{pmatrix}\,.$$
\end{example}


\section{Potential applications and future work}\label{sec:apps}

While even unimodular lattices and dense lattice packings are interesting in their own right,  they can also be utilised in, \emph{e.g.}, code design for wireless communications and physical layer security  \cite{OgVi,OgViBe,Veh,Sethu,BeOg2011,Belfiore-MIMO,wiretapdamiralex}. A detailed introduction to the algebraic tools used to design codes from number fields and cyclic division algebras and the associated lattice code constructions is given in \cite{OgVi} for single-input single-output communications (SISO) and in \cite{OgViBe} for multiple-input multiple-output (MIMO) communications.

Typically, for SISO, vector lattices are utilised. Totally real number fields and their full-diversity ideal lattices arising via the canonical embedding have turned out to be beneficial in terms of reliable communications. In terms of security, well-rounded lattices with good packing density have been proposed for wiretap channels in \cite{wiretapdamiralex} to enhance security on the physical layer. 

Motivated by the use of ideal lattices in the design of lattice codes for reliability and security in fading channels, we are interested in lattices arising from a similar structure, but which differ from ideals in number fields in that they are non-commutative rings. Namely, we have resorted to ideals in quaternion algebras, hence in a way using ``MIMO machinery'' \cite{OgViBe,Veh,Sethu} to construct lattices  suitable for SISO communications. 

Depending on the channel quality, different metrics become relevant as design criteria. The lattices constructed in this paper are not of full diversity, which means they are mainly suitable for low--to--mid-range channel quality, where the density of the lattice is crucial. Such channel qualities are present in, \emph{e.g.}, long-distance or low-power communications, including within the Internet of Things (IoT), 5G networks and beyond.

As future work, we are interested in exploring more dimensions where we can potentially construct dense and interesting lattices from applications' point of view, to be utilised in, \emph{e.g.}, coding theory, cryptography, or machine learning. In order to obtain higher dimensional dense lattices with our method, we aim to introduce some machine learning techniques to help us find the elements that we need (\emph{e.g.}, bases of maximal orders in quaternion algebras or the twisting element $\alpha$). Similar approaches have been taken recently to explore the use of machine learning algorithms to learn properties about algebraic and number theoretic objects \cite{ML_NT}, \cite{ML_NF}, \cite{ML_math}.

\bibliography{References}
\bibliographystyle{alpha}

\newpage

\section*{Appendix}

In this section we provide the MAGMA source code for the construction used in this paper.

\begin{verbatim}
intrinsic basis_order(B :: AlgQuat) -> SeqEnum
{Input: Quaternion algebra over real quadratic field and a=b=-1. 
Output: Basis of maximal order in B.}

    ZZ := Integers();
    F := BaseField(B); d := ZZ!(F.1^2);
    I := B.1; J := B.2; K := B.3;
    a := ZZ!(B.1^2); b := ZZ!(B.2^2);
    if d eq 2 then
        basis := [1, (1+I)*F.1/2, (1+J)*F.1/2, (1+I+J+K)/2];
    else if d mod 4 eq 3 then
        basis := [1, I, (F.1*I+J)/2, (F.1+K)/2];
    else
        if d mod 8 eq 1 then
            basis := [1,I,J,(1+I+J+K)/2];
        else
            basis := [1, I, (F.1+1)/4+(F.1+3)/4*I+J/2, (F.1+3)/4+(F.1+1)/4*I+K/2];
        end if;
    end if; end if;
    return basis;
end intrinsic;


intrinsic Z_basis_order(basis :: SeqEnum) -> SeqEnum
{Input: basis of an integral ideal I in a maximal order in a quaternion algebra B 
over totally real field of degree d.
Output: basis of I as a Z-basis of length 4d.}

    F := BaseRing(basis[1]);
    ZFbasis := IntegralBasis(F);
    Zbasis := [alpha*beta : alpha in ZFbasis, beta in basis];
    return Zbasis;
end intrinsic;


intrinsic Gram_matrix(basis:: SeqEnum: alpha:=1) -> AlgMatElt
{Input: basis of an ideal I in a maximal order in a quaternion algebra B 
over totally real field F of degree d, and a totally real element alpha 
in F (alpha = 1 by default).
Output: Gram matrix of the Z-lattice associated to I, alpha.}
    
    Zbasis := Z_basis_order(basis);
    QQ := Rationals();
    F := BaseRing(Zbasis[1]);
    rows := [QQ!(Trace(alpha*Trace(x*Conjugate(y)))) : x,y in Zbasis];
    G := Matrix(QQ, #Zbasis, rows);
    return G;
end intrinsic;


intrinsic Z_lattice_matrix(basis:: SeqEnum: alpha:=1) -> AlgMatElt
{Input: basis of an ideal I in a maximal order in a quaternion algebra B 
over totally real field F of degree d, and a totally real element alpha in F 
(which is 1 by default).
Output: generator matrix of the Z-lattice associated to I, alpha.}

    Zbasis := Z_basis_order(basis);
    RR := RealField(50);
    F := BaseRing(Zbasis[1]); d := Degree(F);
    n := #Zbasis;
    rows_short := [Eltseq(x) : x in Zbasis];
        
    if d eq 1 then
        rows :=  rows_short[1] cat rows_short[2] cat rows_short[3] cat rows_short[4];
        matrix := Matrix(Rationals(), 4, rows);
    else
        rows := [];
        if alpha in Rationals() then
            rows := [Sqrt(2*alpha)*RealEmbeddings(row[i])[j] :
            j in [1..d], i in [1..4], row in rows_short];
        else
            rows := [Sqrt(2*RealEmbeddings(alpha)[j])*RealEmbeddings(row[i])[j] :
            j in [1..d], i in [1..4], row in rows_short];
        end if;
        matrix := Matrix(RR, 4*d, 4*d, rows);
    end if;
    return matrix;
end intrinsic;





\end{verbatim}

\end{document}

We start with the lattice $E_8$. For this case we have the following known results, that will make the approach easier.

    \begin{remark}\label{E8}
       Any even integral lattice of determinant one in dimension $8$ is similar to $E_8$ \cite{Conway-Sloane}*{Ch. 4}.
    \end{remark}
    
    Using this observation we will prove the following theorem.

    \begin{theorem}\label{thm-E8}
        Let $d\equiv 1 \pmod4$ be a square-free positive integer of the form $d=s^2+4$ and let $F=\mathbb{Q}(\sqrt{d})$ denote a real quadratic field.
        Let $B=\big(\frac{-1,-1}{F}\big)$ denote the quaternion algebra over $F$ and let $\mathcal{O}$ denote the maximal order in $B$ given by Lemma \ref{d1}.
        Let $\alpha=\frac{\varepsilon}{\sqrt{d}}$, with $\varepsilon = \frac{s+\sqrt{d}}{2}$.
        Then the lattice $\Lambda_{(\mathcal{O},\alpha)}$ is similar to $E_8$ and its generator matrix is given by $(\ref{gen_mat})$.
    \end{theorem}

    \begin{proof}\cami{Changed $N(x)$ to $\textrm{Nm}_B(x)$.}
        Take a square-free positive integer $d\equiv 1 \pmod4$ of the form $d=s^2+4$. Consider the quadratic field $F=\mathbb{Q}(\sqrt{d})$ and the quaternion algebra $B=\big(\frac{-1,-1}{F}\big)$. Since $d\not\equiv 1\pmod8$, according to Prop. \ref{discB} we have $D_B=\mathbb{Z}_F$, and
        according to Lemma \ref{d1}, we have that
        $$\mathcal{O}=\mathbb{Z}_F\left[1,I,\frac{\sqrt{d}+1}{4}+\frac{\sqrt{d}+3}{4}I+\frac{J}{2}, \frac{\sqrt{d}+3}{4}+\frac{\sqrt{d}+1}{4}I + \frac{K}{2}\right]$$
        is a maximal order in B.
        Using (\ref{det-formula}), the lattice $\Lambda_{(\mathcal{O},\alpha)}$ twisted by some totally real $\alpha\in F$ has
        $\det(\Lambda_{(\mathcal{O},\alpha)}) = d_F^4 N_F(\alpha)^4$.
        According to Remark \ref{E8}, for the lattice $\Lambda_{(\mathcal{O},\alpha)}$ to be similar to $E_8$, we need to find a totally positive element $\alpha\in F$ of norm $\mathrm{Nm}_{F/\mathbb{Q}}(\alpha)=\frac{1}{d_F}=\frac{1}{d}$ and with $\mathrm{Tr}_{F/\mathbb{Q}}(\alpha \mathrm{Nm}(x))\in\mathbb{Z}$ for all $x\in\mathcal{O}$.
        
        Consider $\varepsilon = \frac{s+\sqrt{d}}{2}$. It is easy to check that $\varepsilon$ is a unit in $\mathbb{Z}_F$ of norm $-1$. We can assume that $\varepsilon>0$ without loss of generality, as we can replace $\varepsilon$ by $-\varepsilon$ if necessary.
        Take $\alpha=\frac{\varepsilon}{\sqrt{d}}$. 
        By assumption we have that
        $\mathrm{Nm}_F(\varepsilon) = \varepsilon\overline{\varepsilon} = -1$, 
        thus $\overline{\varepsilon}<0$ and $\overline{\alpha}=\frac{\overline{\varepsilon}}{-\sqrt{d}}>0$. 
        Hence, $\alpha$ is totally positive and $\mathrm{Nm}_F(\alpha) = \frac{1}{d}$.
        Furthermore, $\mathrm{Tr}_{F/\mathbb{Q}}(\alpha)=1$, so $\mathrm{Tr}_{F/\mathbb{Q}}(\alpha \mathrm{Nm}_B(x))\in\mathbb{Z}$.
        
        For the case $d\not\equiv 1\pmod 4$, we consider $d=s^2+1$, $\varepsilon=s+\sqrt{d}$ and proceed in a similar way.
    \end{proof}
    
    As a consequence of the previous results, we obtain an infinite family of quaternion algebras from which one can construct the $E_8$ lattice.